\documentclass[preprint]{elsarticle}
\usepackage{mathrsfs}
\usepackage{float}
\usepackage[caption = false]{subfig}
\usepackage{bm}\makeatletter \oddsidemargin -.1in \evensidemargin -.1in
\newcommand{\singlespacing}{\let\CS=\@currsize\renewcommand{\baselinestretch}{1}\tiny\CS}
\newcommand{\doublespacing}{\let\CS=\@currsize\renewcommand{\baselinestretch}{1.27}\tiny\CS}
\makeatletter
\newcommand{\setword}[2]{%
  \phantomsection
  #1\def\@currentlabel{\unexpanded{#1}}\label{#2}%
}
\makeatother

\newtheorem{definition}{Definition}[section]
\newtheorem{example}{Example}[section]
\newtheorem{theorem}{Theorem}[section]
\newtheorem{proposition}{Proposition}[section]
\newtheorem{assum}{Assumption}[section]
\newtheorem{note}{Note}[section]
\newproof{proof}{Proof}
\newtheorem{lemma}{Lemma}[section]

\newtheorem{remark}{Remark}[section]

\usepackage{xcolor}
\usepackage{float}
\usepackage{microtype}
\usepackage{times}
\usepackage{graphicx,epsfig}
\usepackage{amsmath}
\usepackage{amssymb}
\usepackage{enumerate}
\usepackage{mathtools}
\everymath{\displaystyle}
\usepackage[linkcolor=blue, urlcolor=blue, citecolor=blue,
colorlinks, bookmarks]{hyperref}
\date{}

\numberwithin{equation}{section}
\journal{Advances in Applied Mathematics}

\begin{document}

\begin{frontmatter}

\title {\textbf{\Large On the horizon problem over the product of self-similar fractals}}

\author[1]{Gurubachan ~}
\ead{gurubachanmakkar@gmail.com}
\author[2]{S. Verma}
\ead{saurabhverma@iiita.ac.in}
\author[1]{V.V.M.S. Chandramouli\corref{mycorrespondingauthor}}
\cortext[mycorrespondingauthor]{Corresponding author}
\ead{chsarma@iitj.ac.in}

\address[1]{Department of Mathematics\\
Indian Institute of Technology Jodhpur\\
Jodhpur 342030, India.}
\address[2]{Department of Applied Sciences\\
Indian Institute of Information Technology Allahabad\\
Prayagraj 211015, India.}

\begin{abstract}
In this article, we first investigate the box dimension results for the graphs of continuous functions defined on the product of two self-similar fractals ($\mathfrak{F}s$), where the fractal $\mathfrak{F}$ satisfies the open set condition. Following this, we manifest the existence of a prevalent surface, defined by a continuous function over the product of two $\mathfrak{F}$s, that satisfies the horizon property (associated with the box dimension), i.e., the box dimension of the prevalent surface is greater than that of its horizon up to a constant $\eta$, where $\eta$ is the box dimension of the self-similar fractal $\mathfrak{F}$. 
\begin{keyword}
Box dimension, continuous functions, horizon property, self-similar fractal.
\end{keyword}

\noindent
AMS Subject Classification: Primary 28A80; Secondary  54C05; 28A78; 46E15.
 
\end{abstract}

\end{frontmatter}
\allowdisplaybreaks
\section{Introduction}\label{sec:1}
Fractal geometry provides an effective framework to study irregular and complex structures such as cells of the body, mountains, clouds, coastlines, etc., and consequently became an emerging area of research. Indeed, most of the fractals can be characterized as the attractors of some iterated function system (IFS). An IFS $\mathfrak{J}=\left\{\mathbb{R}^n;~g_1,g_2,\dots,g_N\right\}$ is the collection of the finite number of the contraction maps defined on $\mathbb{R}^n$, where $n\in \mathbb{N}$ and $N(\ge2)\in \mathbb{N}$. Hutchinson \cite{JH} was the first who gave a significant tool in the form of an IFS, which is employed to construct the attractors. Corresponding to the IFS $\mathfrak{J}$, it has been precisely demonstrated that there exists a unique non-empty compact subset $\mathfrak{F}$ of $\mathbb{R}^m$ that satisfies the self-referential equation
$$\mathfrak{F}=\bigcup_{i=1}^N g_i(\mathfrak{F}).$$
Now, a natural question arises: how to measure the geometric complexity of the fractals? This was affirmatively answered by a prominent tool that is known as \textit{fractal dimension}, which is used to precisely identify their fractal feature. Fractal dimension has various forms like such as the similarity dimension, Hausdorff dimension, box dimension, Assouad dimension, quantization dimension, and intermediate dimension; see \cite{BSS,Fal,Fal2,KZ,FFK2}. Particularly, the estimation of fractal dimension via the mass distribution principle and potential theoretic methods can be seen in \cite{Fal,Fal2}. Whereas in \cite{NPV}, a rigorous technique of computing the Hausdorff dimension has been given that is embedded in the theory of positive linear operators, commonly referred to as the \textit{Perron-Frobenius operators} or \textit{Frobenius-Ruelle operators}, and their spectral radius. Following this technique, an algorithm has been given in \cite{P} to estimate the lower bound of the Hausdorff dimension of a set $\beth$, where $\beth$ is the collection of all complex continued fractions that are of the form $\frac{1}{z_1+}\frac{1}{z_2+}\dots$ with $z_k=u_k+iv_k$, $u_k\in\mathbb{N}$, $v_k\in \mathbb{Z}$, and $k\in\mathbb{N}$. As the graphs of fractal functions are the attractors of some IFS, this leads to computing their dimension and working on some further dimensional approximations and dimensional analysis; see, for instance, \cite{BV,AV2,CA2,DV2,JV} and references therein. If we consider a probability vector $\mathfrak{q}=(q_1,q_2,\dots,q_N)$ with the above-defined IFS $\mathfrak{J}$, then Hutchinson \cite{JH} demonstrated the existence of a unique Borel probability measure $\lambda$ supported on $\mathfrak{F}$ with
 $$\lambda =\sum_{j=1}^N q_j \lambda\circ g_j^{-1}.$$
 This measure $\lambda$ is known as the fractal measure corresponding to weighted IFS $\mathfrak{J}$. Some recent works regarding calculating the dimension of the fractal measure can be seen in \cite{BV2,AV1,DV1,LSV}.
\par
An interesting problem associated with the fractal dimension that has received a great amount of attention in the last few decades is \textit{the horizon problem for random surfaces}, which was introduced by Falconer \cite{F}. In this, potential theoretic techniques were employed to determine the bounds for the Hausdorff dimension for the horizon of index-$\alpha$ Brownian fields in \cite{F,FV}. In particular, it has been shown that almost surely all the index-$\frac{1}{2}$ Brownian surfaces satisfy the horizon property for the Hausdorff dimension. Following this, Falconer and Fraser \cite{FF} proved that a prevalent surface defined on $[0,1]^2$ satisfies the horizon property for the box dimension, i.e.,
$$\dim_B\big(\mathfrak{Gr}(H(g))\big)=\dim_B\big(\mathfrak{Gr}(g)\big)-1,$$
where the horizon $H(g)$ is defined from $[0,1]^2$ to $\mathbb{R}$ as $H(g)(x)=\sup_{y\in[0,1]}g(x,y),$ $\dim_B$ symbolizes the box dimension and $\mathfrak{Gr}(H(g)),$ $ \mathfrak{Gr}(g)$ denote the graph of $H(g),g$, respectively.  

\par
Motivated by the work mentioned above, we start by defining the horizon property for the box dimension on the product of two fractals ($\mathfrak{F}$s), where $\mathfrak{F}$ satisfies the open set condition (OSC). We manifest the existence of the prevalent surface in a completely topological metrizable space (defined on $\mathfrak{F}\times\mathfrak{F}$) that satisfies the horizon property w.r.t. the box dimension. In a precise manner, our primary result is as follows:
\begin{theorem}\label{thm12}
    Consider $\mathfrak{F}$ is a self-similar fractal from $\mathbb{R}^n$ satisfying the OSC and having its box dimension as $\eta$. Then,
    \begin{enumerate}
        \item \label{thm12_it2} for each $\xi \in [2\eta,2\eta+1)$, the subset $\mathfrak{A}_{\xi}(\mathfrak{F} \times \mathfrak{F})$ is a prevalent of $\big(\mathcal{D}_{\xi}(\mathfrak{F} \times \mathfrak{F}),\Theta_\xi\big);$
        \item the set $\mathfrak{A}_{2\eta+1}(\mathfrak{F} \times \mathfrak{F})$ is a prevalent subset of $\big(\mathcal{C}(\mathfrak{F} \times \mathfrak{F}),\Theta_\infty\big)$.
    \end{enumerate}
\end{theorem}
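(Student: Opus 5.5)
We follow the Falconer--Fraser strategy for $[0,1]^2$, adapted to $\mathfrak{F}\times\mathfrak{F}$. We read $\mathcal{D}_\xi(\mathfrak{F}\times\mathfrak{F})$ as the space of continuous functions whose graphs have upper box dimension at most $\xi$, with the complete metric $\Theta_\xi$. We read $\mathfrak{A}_\xi(\mathfrak{F}\times\mathfrak{F})$ as the set of $g$ with $\dim_B\mathfrak{Gr}(g)=\xi$ and $\dim_B\mathfrak{Gr}(H(g))=\xi-\eta$, where $H(g)(x)=\sup_{y\in\mathfrak{F}}g(x,y)$.

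The first ingredient is the box-counting toolkit for $\mathfrak{F}$. By the OSC, for each scale $\delta$ we can cover $\mathfrak{F}$ by $N_\delta\asymp\delta^{-\eta}$ pieces $g_{i_1}\circ\dots\circ g_{i_k}(\mathfrak{F})$ of diameter $\asymp\delta$ with bounded overlap. For continuous $f$ on $\mathfrak{F}\times\mathfrak{F}$ we then have
\[
N_\delta(\mathfrak{Gr}(f))\asymp \delta^{-1}\sum_{Q}R_f[Q]+\delta^{-2\eta},
\]
where $Q$ ranges over the $\asymp\delta^{-2\eta}$ product cells and $R_f[Q]$ is the oscillation of $f$ on $Q$. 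The same estimate holds on $\mathfrak{F}$ alone with $\eta$ in place of $2\eta$. Consequences:
\begin{itemize}
\item $2\eta\le\underline{\dim}_B\mathfrak{Gr}(f)\le\overline{\dim}_B\mathfrak{Gr}(f)\le 2\eta+1$, and $\eta\le\dim\mathfrak{Gr}(H(f))\le\eta+1$.
\item Since $R_{H(f)}[I]\le\max_J R_f[I\times J]$, we get the general inequality $\overline{\dim}_B\mathfrak{Gr}(H(f))\le\overline{\dim}_B\mathfrak{Gr}(f)-\eta$.
\end{itemize}
So the horizon only has to be bounded from below, and the graph dimension of $g$ itself is not in question.

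For prevalence we use a one-dimensional probe. Pick a single function $\phi\in\mathcal{D}_\xi$ (respectively $\phi\in\mathcal{C}$ for $\xi=2\eta+1$) with the following properties:
\begin{itemize}
\item $\phi(x,y)=\psi(x)$ depends only on $x$;
\item $\psi$ is a Weierstrass/Takagi-type function adapted to the self-similar structure of $\mathfrak{F}$, i.e.\ a lacunary sum $\sum_k a^{k(\xi-2\eta-1)}\chi_k$ of functions oscillating at level-$k$ cells;
\item $\dim_B\mathfrak{Gr}(\psi)=\xi-\eta$ robustly;
\item for $\xi=2\eta+1$, a sum of pieces with exponents tending to $\eta+1$ gives upper dimension $\eta+1$.
\end{itemize}
The probe measure is Lebesgue measure on $\{t\phi: t\in[0,1]\}$. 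We must show that for every $f$, the set of $t$ with $f+t\phi\notin\mathfrak{A}_\xi$ is Lebesgue-null. The key identity is $H(f+t\phi)=H(f)+t\psi$, since $\phi$ is constant in $y$. This reduces the surface problem to a one-variable statement on $\mathfrak{F}$.

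The core one-dimensional step is to show that for any continuous $u$ on $\mathfrak{F}$, $\dim_B\mathfrak{Gr}(u+t\psi)\ge\dim_B\mathfrak{Gr}(\psi)$ for all but at most one $t$. The argument is the standard oscillation trick. If $t_1\neq t_2$ both fail, then
\[
(t_1-t_2)\psi=(u+t_1\psi)-(u+t_2\psi),
\]
and $R_{v-w}[I]\le R_v[I]+R_w[I]$. Hence the oscillation sums for $\psi$ are bounded by those of two smaller-dimensional graphs, which contradicts the exact dimension of $\psi$. The lower box dimension needs care: choose $\psi$ with $\underline{\dim}=\overline{\dim}$ and run the argument along all $\delta$.

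With this in hand, for almost every $t$ we get $\dim_B\mathfrak{Gr}(H(f+t\phi))\ge\xi-\eta$. Combined with the toolkit inequality, $\dim_B\mathfrak{Gr}(H(f+t\phi))\le\overline{\dim}_B\mathfrak{Gr}(f+t\phi)-\eta\le\xi-\eta$, where the last step uses $f+t\phi\in\mathcal{D}_\xi$. This squeeze forces $\dim_B\mathfrak{Gr}(f+t\phi)=\xi$ and the horizon identity. We also verify that $\mathfrak{A}_\xi$ is Borel, in fact $F_{\sigma\delta}/G_{\delta\sigma}$, by writing the dimension conditions through countably many $\delta$ and using continuity of $f\mapsto N_\delta$-type oscillation sums in $\Theta_\xi$.

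The main obstacle is constructing $\psi$ on a general self-similar $\mathfrak{F}$ with exact, two-sided controlled oscillation sums. The OSC gives only bounded overlap, and the contraction ratios may differ, so naive lacunary sums need not have $\sum_I R_\psi[I]\asymp\delta^{\eta-(\xi-\eta)+1}$. I would build $\psi$ level by level on the symbolic coding: assign $\pm$ bumps of height $r_w^{\xi-2\eta}$ on each cylinder $w$, where $r_w$ is the cylinder's diameter, using the Moran-type cut sets $\{w: r_w\asymp\delta\}$. I would then verify the upper and lower estimates via the OSC volume argument.
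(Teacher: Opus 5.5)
\textbf{Gap: the horizon upper bound.} The step that fails is your ``general inequality'' $\overline{\dim}_B\mathfrak{Gr}(H(f))\le\overline{\dim}_B\mathfrak{Gr}(f)-\eta$, and with it the squeeze. From $R_{H(f)}[I]\le\max_J R_f[I\times J]$ you only get $\sum_I R_{H(f)}[I]\le\sum_{I,J}R_f[I\times J]$, i.e.\ $\overline{\dim}_B\mathfrak{Gr}(H(f))\le\overline{\dim}_B\mathfrak{Gr}(f)$. Gaining the $\eta$ would need the oscillation to be spread evenly in $\mathbf{y}$, namely $\max_J R_f[I\times J]\lesssim\delta^{\eta}\sum_J R_f[I\times J]$, and nothing in $\mathcal{D}_\xi$ forces this.

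For $\mathfrak{F}=[0,1]$ take $f(x,y)=\sum_k 2^{-ks}\bigl(1+\cos(2^k\pi x)\bigr)\beta(2^{k\gamma}y)$ with $\beta(y)=\max\{0,1-|y|\}$, $s,\gamma>0$ and $s+\gamma<1$. Every term is nonnegative and maximal at $y=0$, so $H(f)(x)=f(x,0)$ is a Weierstrass function plus a constant, and $\dim_B\mathfrak{Gr}(H(f))=2-s$. The $k$-th term, however, lives on a strip of width $2^{-k\gamma}$, and the oscillation count gives $\overline{\dim}_B\mathfrak{Gr}(f)\le 3-s-\gamma$, so the inequality fails.

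This is the Falconer--Fraser phenomenon, and it is why the paper's $\mathfrak{A}_\xi$ is \emph{not} the set you describe. It consists of $g$ with $\dim_B\mathfrak{Gr}(g)=\xi$ and only $\xi-\eta\le\underline{\dim}_B\mathfrak{Gr}(H(g))\le\overline{\dim}_B\mathfrak{Gr}(H(g))\le 1+\eta$. Falconer and Fraser show for $[0,1]^2$ that these bounds are as tight as possible in the prevalence sense. So for $\xi<2\eta+1$ your target set is in general not prevalent at all; only at $\xi=2\eta+1$ do the two readings coincide.

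\textbf{How to repair it.} Follow the paper's route and discard the squeeze.
\begin{enumerate}
\item The upper bound $1+\eta$ for the horizon is trivial.
\item Obtain $\dim_B\mathfrak{Gr}(f+t\phi)=\xi$ for a.e.\ $t$ directly from the sum lemmas on $\mathfrak{F}\times\mathfrak{F}$. Use $\overline{\dim}_B\mathfrak{Gr}(f)\le\xi$ and $\dim_B\mathfrak{Gr}(\phi)=\dim_B\bigl(\mathfrak{Gr}(\psi)\times\mathfrak{F}\bigr)=\xi$. The horizon cannot give this, since, as the example shows, $\dim_B\mathfrak{Gr}(H(f))+\eta$ is not a lower bound for the surface dimension.
\item Get the horizon lower bound from $H(f+t\phi)=H(f)+t\psi$ and the same lemma on $\mathfrak{F}$.
\end{enumerate}

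\textbf{Further points to fix.} Your one-dimensional lemma in the form ``all but at most one $t$'' is false for lower box dimension, even when $\psi$ has exact dimension. Two bad parameters can fail along disjoint sequences of scales, so subtracting them controls no single scale.

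What is true, and enough for a probe, is the a.e.\ version. Set $d=\underline{\dim}_B\mathfrak{Gr}(\psi)>\eta$. At a fixed scale $\delta_m$, the subtraction shows that $\{t:\sum_I R_{u+t\psi}[I]\le\delta_m^{1-d+\epsilon}\}$ has diameter $O(\delta_m^{\epsilon/2})$. Borel--Cantelli along a geometric sequence of scales then gives $\underline{\dim}_B\mathfrak{Gr}(u+t\psi)\ge d$ for a.e.\ $t$.

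Likewise, for $\xi=2\eta+1$ you need $\underline{\dim}_B\mathfrak{Gr}(\psi)=\eta+1$, not merely upper dimension $\eta+1$.
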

We also discuss the existence of a prevalent surface in the H\"older space defined on $\mathfrak{F\times F}$. To consolidate this theory, we show some canonical cases over the compact interval $[0,1]$ and the Sierpi\'nski gasket ($SG$), that are well-known fractals of $\mathbb{R}$ and $\mathbb{R}^2$, respectively.
\par
The article is structured as follows: In Section \ref{sec:2}, we recall some essential definitions and remarkable notes from \cite{JH,Fal,FF}. In Section \ref{sec:3}, we discuss the main results of this article. Particularly, we define and investigate the horizon property in terms of the box dimension for the prevalent surface on the product of two fractals ($\mathfrak{F}$s), where $\mathfrak{F}$ is the self-similar fractal with the OSC. In Section \ref{sec:4}, we demonstrate some special cases of the analytical results done in Section \ref{sec:3} in the form of illustrations for some recognized fractals in $\mathbb{R}$ and $\mathbb{R}^2$.

\section{Preliminaries}\label{sec:2}
First, we recall the definitions related to the Hausdorff dimension and the box dimension.

The collection of all non-empty compact subsets of $\mathbb{R}^n$ is denoted by $\mathscr{K}(\mathbb{R}^n)$ and the Hausdorff distance between any two sets $E_1,E_2$ of $\mathscr{K}(\mathbb{R}^n)$ is defined as:
\begin{eqnarray*}
    \mathfrak{H}(E_1,E_2):=\max\Big\{\sup_{a\in E_1}\inf_{b\in E_2}\|a-b\|_2, \sup_{b\in E_2}\inf_{a\in E_1}\|a-b\|_2\Big\},
\end{eqnarray*}
where $\|.\|_2$ stands for the Euclidean norm.

\begin{definition}\cite{Fal}
Let $\delta>0$ and $s$ be any non-negative number. For any subset $E$ of $\mathbb{R}^n$, its $s$-dimensional Hausdorff measure is defined as:
$$\mathscr{H}^s(E)=\lim_{\delta\to 0^+}\mathscr{H}_{\delta}^s(E),$$
where $\mathscr{H}_{\delta}^s(E)=\inf\left\{\sum_{i=1}^{\infty}|\mathcal{V}_i|^s:~ E\subseteq\bigcup_{i=1}^{\infty}\{\mathcal{V}_i\}\text{ and } 0\le |\mathcal{V}_i|\le \delta\right\}$ and $|\mathcal{V}_i|=\sup_{x,y\in \mathcal{V}_i}\|x-y\|_2$. The Hausdorff dimension of $E$ is defined as:
\begin{eqnarray*}
    \dim_{\mathscr{H}}(E)&=& \inf\{s\geq0:~\mathscr{H}^s(E)=0\}\\
    &=&\sup\{s\geq0:~\mathscr{H}^s(E)=\infty\}.
\end{eqnarray*}
\end{definition}

\begin{definition} \cite{Fal}
Let $E$ be any non-empty, bounded subset of $ \mathbb{R}^n$. Then, the lower dimension of $E$ is defined as
\begin{eqnarray*}
    \underline{\dim}_B(E)&=&\varliminf_{\delta\to 0^+}\frac{\log(N_{\delta}(E))}{-\log(\delta)},
\end{eqnarray*}
and the upper box dimension of $E$ is defined as
\begin{eqnarray*}
    \overline{\dim}_B(E)&=&\varlimsup_{\delta\to 0^+}\frac{\log(N_{\delta}(E))}{-\log(\delta)},
\end{eqnarray*}
 where $N_{\delta}(E)$ denotes the number of cubes of $\delta$-mesh that intersect with $E$. 
If $\underline{\dim}_B(E)= \overline{\dim}_B(E)$, then we say that the box dimension of $E$ exists and denote it by $\dim_B(E)~\big(=\underline{\dim}_B(E)= \overline{\dim}_B(E)\big)$.
\end{definition}
Let us recall some definitions from \cite{FF}.
\begin{definition}
    A subset $G$ of a completely metrizable topological vector space $Y$ is said to be prevalent if 
    \begin{enumerate}
        \item $G$ is a Borel set;
        \item There exists a Borel measure $\mu$ on $Y$ and a compact subset $E$ of $Y$ with $0<\mu(E)<\infty$ and 
        $$\mu\big(Y\backslash(G+y)\big)=0\quad y\in Y.$$
    \end{enumerate}
\end{definition}
\begin{definition}
    For a Borel set $E\subset Y$, a $m$-dimensional subspace $\Lambda$ of $Y$ is said to be a probe if
    $$\mathscr{L}_\Lambda\big(Y\backslash(E+y)\big)=0\quad\text{for }y\in Y,$$
    where $\mathscr{L}_\Lambda$ represents $m$-dimensional Lebesgue measure on $\Lambda$ defined in the fundamental way.
\end{definition}
\begin{note}
Here, we highlight some basic terminologies related to the above two definitions:
\begin{enumerate}
    \item A shy set is a complement of a prevalent set.
    \item A set $G$ is referred to as a $m$-prevalent set if it has a $m$-dimensional probe.
    \item The sufficient condition for a set $G$ to be prevalent is to show the existence of a probe.
\end{enumerate}    
\end{note}

Now, we move to the construction of the self-similar fractal. For this, we start with $N$ self-similar maps $L_i:\mathbb{R}^n\to \mathbb{R}^n$, that are defined as 
$$L_i(\mathbf{x})=a^{(i)}\mathbf{x}+\mathbf{b}^{(i)},$$
where $a^{(i)}\in (-1,1)$ and $\mathbf{b}^{(i)}=(b_1^{(i)},b_2^{(i)},\dots,b_n^{(i)})\in \mathbb{R}^n$. Subsequently, for each $i$, $L_i$ is the contraction map on $\mathbb{R}^n$ and possesses a unique fixed point, symbolized by $\mathbf{q}^{(i)}$. By the Banach contraction principle, the IFS $\{\mathbb{R}^n;~L_i,i=1,2,\dots,N\}$ yields the unique self-similar fractal $\mathfrak{F}$ that satisfies
$$\mathfrak{F}=\bigcup_{i=1}^N L_i(\mathfrak{F}).$$

\begin{definition}
    Let $N\in \mathbb{N}$ be fixed. An IFS $\{\mathbb{R}^n;~g_1,g_2,\dots,g_N\}$ is referred to as satisfying the open set condition (OSC) if for some open set $\mathfrak{O}\subseteq \mathbb{R}^n$, we have
    $$\bigcup_{i=1}^N g_i(\mathfrak{O})\subseteq \mathfrak{O}\text{ and } g_i(\mathfrak{O})\cap g_j(\mathfrak{O})=\emptyset,$$
    where $i\ne j$.
\end{definition}
From now onwards, we work under the following assumption:
\begin{assum}\label{assum2.1}
    The above-constructed fractal $\mathfrak{F}$ satisfies the OSC.
\end{assum}

\begin{note}\label{note2.2}
    Under Assumption \ref{assum2.1} and by Hutchinson \cite{JH}, the above-constructed fractal $\mathfrak{F}$ has its box dimension ($\eta$) equal to the similarity dimension ($\mathfrak{s}$), where $\mathfrak{s}$ is the solution of the equation
    $$\sum_{i=1}^N \big(a^{(i)}\big)^\mathfrak{s}=1.$$
\end{note}

\section{Main Results}\label{sec:3}
In this section, we first define the \textit{horizon property} (associated with the box dimension) on $\mathfrak{F} \times \mathfrak{F}$. Next, we construct a completely metrizable topological vector space (on $\mathfrak{F} \times \mathfrak{F}$) to establish the existence of a Borel subset in it, followed by the construction of a probe subspace. This leads us to establishing a prevalent surface that satisfies the horizon property on $\mathfrak{F} \times \mathfrak{F}$. 

Consider
$$\mathcal{C}(\mathfrak{F} \times \mathfrak{F}):=\Big\{g : \mathfrak{F} \times \mathfrak{F} \to \mathbb{R}~|~g \text{ is continuous on } \mathfrak{F} \times \mathfrak{F}\Big\}.$$
Motivated by Falconer and Fraser \cite{FF}, we define the following:
\begin{definition}
    Let $g \in \mathcal{C}(\mathfrak{F} \times \mathfrak{F})$. The horizon function $H(g) \in \mathcal{C}(\mathfrak{F})$ of $g$ is defined by 
    $$H(g)(\mathbf{x}):= \sup_{\mathbf{y} \in \mathfrak{F} } g(\mathbf{x},\mathbf{y}).$$
\end{definition}
For any function $h$, we symbolize its graph with $\mathfrak{Gr}(h)$.
\begin{definition}[Horizon Property]
    A function $g\in \mathcal{C}(\mathfrak{F} \times \mathfrak{F})$ exhibits the horizon property (in relation to the box dimension) if the box dimensions of $\mathfrak{Gr}(g)$, $\mathfrak{Gr}(H(g))$ exist and
    $$\dim_B\big(\mathfrak{Gr}(H(g))\big)= \dim_B\big(\mathfrak{Gr}(g)\big)- \eta.$$
\end{definition}

Now, for each $\xi \in [2\eta,2\eta+1]$, let us start by defining a  few subsets as 
\begin{eqnarray*}
    \mathcal{D}_{\xi}(\mathfrak{F} \times \mathfrak{F})&:=&\big\{g \in \mathcal{C}(\mathfrak{F} \times \mathfrak{F}):~\overline{\dim}_B\big(\mathfrak{Gr}(g)\big) \le \xi \big\},\\
    \mathcal{B}_{\xi}(\mathfrak{F} \times \mathfrak{F})&:=&\big\{g \in \mathcal{C}(\mathfrak{F} \times \mathfrak{F}):~\dim_B\big(\mathfrak{Gr}(g)\big) =  \xi\big\},
\end{eqnarray*}
and 
$$\mathfrak{A}_\xi(\mathfrak{F}\times \mathfrak{F}):=\bigg\{g\in \mathcal{B}_\xi(\mathfrak{F}\times \mathfrak{F}):~\xi-\eta\le\underline{\dim}_B\big(\mathfrak{Gr}(H(g))\big)\le \overline{\dim}_B\big(\mathfrak{Gr}(H(g))\big)\le 1+\eta\bigg\}.$$

Next, for a fixed $m\in\mathbb{N}$, we symbolize the collection of all words of the length $m$ by $\Sigma^m$, i.e., if $\omega\in \Sigma^m$ then $\omega=(\omega_1,\omega_2,\dots,\omega_m);~\omega_i\in\Sigma$. For $\omega\in\Sigma^m$, we define
$$L_\omega:=L_{\omega_1}\circ L_{\omega_2}\circ \dots \circ L_{\omega_m},$$
and
$$V_m:=\{L_\omega(\mathbf{q}^{(i)}):~i\in\Sigma,~\omega\in\Sigma^m\}.$$
Now, for $\omega,\tau\in\Sigma^m$, let us define oscillation of a function $g\in \mathcal{C}(\mathfrak{F} \times \mathfrak{F})$ over $L_\omega(\mathfrak{F})\times L_\tau (\mathfrak{F})$ by
$$\mathcal{R}_g[L_\omega(\mathfrak{F})\times L_\tau (\mathfrak{F})]= \sup\big\{|g(\mathbf{x},\mathbf{y}) -g(\hat{\mathbf{x}},\hat{\mathbf{y}})|:~\big((\mathbf{x},\mathbf{y}),(\hat{\mathbf{x}},\hat{\mathbf{y}})\big)\in L_\omega(\mathfrak{F})\times L_\tau (\mathfrak{F})\big\},$$
and latter is the total oscillation having  order $m$, 
$$ Osc(m,g)= \sum_{\omega,\tau\in\Sigma^m} \mathcal{R}_g[L_\omega(\mathfrak{F})\times L_\tau (\mathfrak{F})].$$ 
\begin{lemma}\label{lem3.1}
    Let $|a|=\max_{1\leq i \le N}{a^{(i)}}$. Then, for any function $g\in \mathcal{C}(\mathfrak{F} \times \mathfrak{F})$, we have
    \begin{equation}
        \frac{1}{|a|^m}\sum_{\omega,\tau\in \Sigma^m}\mathcal{R}_g[L_\omega(\mathfrak{F})\times L_\tau (\mathfrak{F})]\le N_\delta\big(Gr(g)\big)\le 2.N^m.N^m+\frac{1}{|a|^m}\sum_{\omega,\tau\in \Sigma^m}\mathcal{R}_g[L_\omega(\mathfrak{F})\times L_\tau (\mathfrak{F})].\label{osc}
    \end{equation}
\end{lemma}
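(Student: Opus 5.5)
This is the standard oscillation–box-counting estimate (in the spirit of Falconer's Proposition 11.1), transplanted to the cells $L_\omega(\mathfrak{F})\times L_\tau(\mathfrak{F})$ of the product fractal. I will take $\delta=|a|^m$ and count $\delta$-mesh cubes in $\mathbb{R}^{2n+1}$; constants depending only on $n$ and the IFS will be suppressed or absorbed, as the statement implicitly does. The key simplifying choice is the equicontractive case $|a^{(i)}|=|a|$, or more generally comparability of each cell's diameter with $\delta$. Under this choice, each set $L_\omega(\mathfrak{F})\times L_\tau(\mathfrak{F})$ has diameter at most $|a|^m\,|\mathfrak{F}\times\mathfrak{F}|$. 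Its projection onto $\mathbb{R}^{2n}$ therefore meets only a bounded number $c$ of $\delta$-mesh cubes, with $c$ independent of $m$. After normalizing $|\mathfrak{F}|\le 1$, I will treat $c$ as $1$ or $2$.

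For the upper bound, fix a pair $(\omega,\tau)\in\Sigma^m\times\Sigma^m$ and let $Q$ be one of the base cubes meeting the cell. Over $Q\cap\big(L_\omega(\mathfrak{F})\times L_\tau(\mathfrak{F})\big)$, the graph lies in a vertical column whose height is at most $\mathcal{R}_g[L_\omega(\mathfrak{F})\times L_\tau(\mathfrak{F})]$. A vertical interval of length $r$ meets at most $2+r/\delta$ mesh intervals of side $\delta$. Hence the graph over this cell meets at most $2+\delta^{-1}\mathcal{R}_g[\cdot]$ cubes. Since $\mathfrak{F}\times\mathfrak{F}=\bigcup_{\omega,\tau}L_\omega(\mathfrak{F})\times L_\tau(\mathfrak{F})$, summing over all $N^m\cdot N^m$ pairs gives $N_\delta(\mathfrak{Gr}(g))\le 2N^mN^m+|a|^{-m}\sum_{\omega,\tau}\mathcal{R}_g[\cdot]$.

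For the lower bound, continuity of $g$ on the connected-in-value sense is not available, because $\mathfrak{F}$ may be disconnected. I will therefore use the two points realizing the oscillation instead. Pick $(\mathbf{x},\mathbf{y})$ and $(\hat{\mathbf{x}},\hat{\mathbf{y}})$ in the cell with $|g(\mathbf{x},\mathbf{y})-g(\hat{\mathbf{x}},\hat{\mathbf{y}})|$ close to $\mathcal{R}_g$. Every $\delta$-cube containing a graph point over the cell has vertical extent $\delta$. I need at least $\mathcal{R}_g/\delta$ such cubes to cover the graph over that cell, at least when the graph's vertical range is "filled".

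This is the main obstacle. For a general continuous function on a totally disconnected $\mathfrak{F}$, the graph over a cell need not fill its vertical range, so the naive column count fails. My first attempt will be the standard workaround: use the OSC, via Note \ref{note2.2} and the Hutchinson/Moran structure, to get that cells of level $m$ overlap only boundedly. Then I will prove the lower bound only up to a multiplicative constant, which suffices for dimension computations. To handle disconnection, I will count cubes meeting the graph over the cell together with its neighbours. Alternatively, when $\mathfrak{F}$ is connected (as for $[0,1]$ and $SG$, which contain the connecting vertices $V_m$), $g$ restricted to the connected cell attains every intermediate value. The graph then meets at least $\mathcal{R}_g/\delta$ distinct vertical levels, so it meets at least $\mathcal{R}_g/\delta$ cubes per cell. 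Finally, I will divide by the bounded overlap multiplicity so that each cube is counted at most boundedly often across cells, and sum to obtain the left inequality.
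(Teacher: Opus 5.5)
Your upper bound is the same argument the paper uses. Its proof just takes $\delta=|a|^m$, counts $\delta$-columns over the cells $L_\omega(\mathfrak{F})\times L_\tau(\mathfrak{F})$, and cites Falconer's interval estimate. You are right that, with genuine mesh cubes, a constant depending on $n$ and $|\mathfrak{F}|$ has to be allowed.

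The genuine gap is the lower bound for disconnected $\mathfrak{F}$. Counting cubes over a cell ``together with its neighbours'' cannot work, because there the left inequality is false, even up to constants. Take $\mathfrak{F}$ the middle-third Cantor set (maps $x/3$ and $x/3+2/3$, so $N=2$, $|a|=1/3$). Fix $m$, and let $g(\mathbf{x},\mathbf{y})=1$ if $\mathbf{x}\in L_\omega(L_2(\mathfrak{F}))$ for some $\omega\in\Sigma^m$, and $g(\mathbf{x},\mathbf{y})=0$ otherwise. This $g$ is locally constant, hence continuous, and every level-$m$ cell has $\mathcal{R}_g=1$. So the left side equals $3^m4^m=12^m$, whereas $\mathfrak{Gr}(g)\subseteq\mathfrak{F}\times\mathfrak{F}\times\{0,1\}$ meets only $O(4^m)$ cubes of side $3^{-m}$.

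A $g$-dependent constant does not rescue it either. Let $g(\mathbf{x},\mathbf{y})=\sum_k x_k\lambda^k$, where $\mathbf{x}=\sum_k 2x_k3^{-k}$ with $x_k\in\{0,1\}$ and $\lambda=0.4$. The left side is $\lambda(1-\lambda)^{-1}(12\lambda)^m$. Cover the graph by products of level-$j$ cells in $\mathbf{x}$ (with $\lambda^j\approx 3^{-m}$) and level-$m$ cells in $\mathbf{y}$. This gives $N_\delta(\mathfrak{Gr}(g))=O(2^m\cdot 2^{m\log 3/\log(1/\lambda)})=O(4.6^m)$, which is $o(4.8^m)$. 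The lower bound is exactly the intermediate-value step in Falconer's argument, and nothing replaces it once the cells are disconnected.

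Likewise, equicontractivity is not a harmless simplification but a hypothesis the lower bound needs. Take $\mathfrak{F}=[0,1]$ generated by $x/4$ and $3x/4+1/4$ (OSC holds with $(0,1)$), so $|a|=3/4$. For $g(\mathbf{x},\mathbf{y})=\mathbf{x}$ one gets $\sum_{\omega,\tau}\mathcal{R}_g=2^m$, since the $\mathbf{x}$-cells tile $[0,1]$. So the left side is $(8/3)^m$, while $N_\delta(\mathfrak{Gr}(g))\asymp\delta^{-2}=(16/9)^m$. Most level-$m$ cells are far smaller than $\delta$, and many of them share a cube.

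So what your argument really proves is the correct content of the lemma. That is the upper bound up to a constant in general, and the lower bound up to a constant when $\mathfrak{F}$ is connected and the IFS is equicontractive. The latter follows from the intermediate value theorem on each connected cell plus the OSC bounded-overlap count you describe, and it covers $[0,1]$ and $SG$.

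The paper's proof does not reach further. It avoids the overlap problem only by redefining $N_\delta$ as a count of boxes built on the convex hulls of the cells. That is not the box-counting number when cells are not comparable to $\delta$, and its appeal to Falconer tacitly uses connectedness. You should make these hypotheses explicit rather than leave the disconnected case as an open step.
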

\begin{proof}
    Let $\mathbf{x},\mathbf{y}\in \mathbb{R}^m$. Consider,
    \begin{eqnarray*}
        \|L_i(\mathbf{x})-L_i(\mathbf{y)}\|_2 &=& |a^{(i)}|~\|\mathbf{x}-\mathbf{y}\|_2\\
        &\le& |a|~\|\mathbf{x}-\mathbf{y}\|_2,
    \end{eqnarray*}
    where $|a|=\max_{1\leq i \le N}{a^{(i)}}$. In general, $$\|L_\omega(\mathbf{x})-L_\omega(\mathbf{y)}\|_2 \le |a|^m~\|\mathbf{x}-\mathbf{y}\|_2.$$
    Now, choose $\delta=|a|^m$ and let $N_\delta\big(Gr(g)\big)$ denote the number of convex hulls of $L_\omega(\mathfrak{F})\times L_\tau (\mathfrak{F})$ with side length $\delta$ that intersect with the graph $Gr(g)$. Now, by the definition of $\mathcal{R}_g[L_\omega(\mathfrak{F})\times L_\tau (\mathfrak{F})]$ and \cite{Fal}, we get the required result.
\end{proof}

\begin{proposition}\label{prop23}
    Consider $g,h \in \mathcal{C}(\mathfrak{F} \times \mathfrak{F})$. 
    \begin{enumerate}
        \item \label{prop23_it1}Then, 
        $$\overline{\dim}_B\big(\mathfrak{Gr}(g+h)\big)\le \max\{\overline{\dim}_B\big(\mathfrak{Gr}(g)\big),\overline{\dim}_B\big(\mathfrak{Gr}(h)\big)\}.$$
        In particular, $\mathcal{D}_\xi(\mathfrak{F} \times \mathfrak{F})$ is a vector space.
        \item \label{prop23_it2}Assume that $\overline{\dim}_B\big(\mathfrak{Gr}(g)\big)\ne \overline{\dim}_B\big(\mathfrak{Gr}(h)\big)$, then 
        $$\overline{\dim}_B\big(\mathfrak{Gr}(g+h)\big)=\max\{\overline{\dim}_B\big(\mathfrak{Gr}(g)\big),\overline{\dim}_B\big(\mathfrak{Gr}(h)\big)\}$$
    \end{enumerate}
\end{proposition}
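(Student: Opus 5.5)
Both parts follow from the oscillation estimate of Lemma \ref{lem3.1}, applied at the scales $\delta = |a|^m$, together with subadditivity of oscillation. For any $\omega,\tau\in\Sigma^m$ the triangle inequality gives
$$\mathcal{R}_{g+h}[L_\omega(\mathfrak{F})\times L_\tau(\mathfrak{F})]\le \mathcal{R}_{g}[L_\omega(\mathfrak{F})\times L_\tau(\mathfrak{F})]+\mathcal{R}_{h}[L_\omega(\mathfrak{F})\times L_\tau(\mathfrak{F})],$$
and summing over all pairs yields $Osc(m,g+h)\le Osc(m,g)+Osc(m,h)$. Plugging this into the upper bound of (\ref{osc}) and then applying the lower bound of (\ref{osc}) to $g$ and to $h$, I obtain
$$N_{|a|^m}\big(\mathfrak{Gr}(g+h)\big)\le 2N^{2m}+N_{|a|^m}\big(\mathfrak{Gr}(g)\big)+N_{|a|^m}\big(\mathfrak{Gr}(h)\big).$$
Two preliminary facts make this usable.

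\begin{itemize}
\item The term $2N^{2m}$ has exponent $\log(N^2)/(-\log|a|)$. In the similarity setting (with the lemma's convention that $\delta=|a|^m$ relates to the word count) this equals $2\eta$, which is a lower bound for $\overline{\dim}_B$ of any graph over $\mathfrak{F}\times\mathfrak{F}$, since the graph projects onto $\mathfrak{F}\times\mathfrak{F}$ and projection is Lipschitz. So this term never dominates.
\item The upper box dimension may be computed along the geometric sequence $\delta_m=|a|^m$, because $N_\delta$ is monotone in $\delta$ and consecutive scales differ by a fixed factor.
\end{itemize}

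Using $\log(x+y+z)\le \log 3+\log\max(x,y,z)$, dividing by $-m\log|a|$ and taking $\limsup$ gives part (\ref{prop23_it1}). The vector space claim then follows. Closure under addition is immediate. For scalars $c\neq0$, the graph of $cg$ is the image of the graph of $g$ under the bi-Lipschitz map $(\mathbf{x},\mathbf{y},t)\mapsto(\mathbf{x},\mathbf{y},ct)$, so its upper box dimension is unchanged. For $c=0$, the graph of $0$ is $\mathfrak{F}\times\mathfrak{F}\times\{0\}$, of dimension $2\eta\le\xi$.

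For part (\ref{prop23_it2}), assume without loss of generality that $\overline{\dim}_B(\mathfrak{Gr}(g))>\overline{\dim}_B(\mathfrak{Gr}(h))$. Write $g=(g+h)+(-h)$ and apply part (\ref{prop23_it1}), noting that $-h$ has the same upper box dimension as $h$:
$$\overline{\dim}_B(\mathfrak{Gr}(g))\le\max\{\overline{\dim}_B(\mathfrak{Gr}(g+h)),\overline{\dim}_B(\mathfrak{Gr}(h))\}.$$
The maximum cannot be attained by the $h$ term, since that would contradict the strict inequality. Hence $\overline{\dim}_B(\mathfrak{Gr}(g+h))\ge\overline{\dim}_B(\mathfrak{Gr}(g))$, and the reverse inequality is part (\ref{prop23_it1}).

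The main obstacle is the additive term $2N^{2m}$ and the passage from the discrete scales $|a|^m$ to all $\delta$. I would handle these with the two preliminary facts above: the additive term is controlled by the dimension of $\mathfrak{F}\times\mathfrak{F}$, and the scale restriction is harmless by monotonicity of $N_\delta$.
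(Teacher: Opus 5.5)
Your argument is essentially the paper's: subadditivity of $\mathcal{R}$ fed through Lemma~\ref{lem3.1} at the scales $\delta=|a|^m$ for part (1), and part (1) applied to $g=(g+h)+(-h)$ for part (2). It is tighter than the paper's version on two points. The paper removes the additive $2N^{2m}$ term only through the assertion $N_\delta(\mathfrak{Gr}(g))\asymp|a|^{-m}Osc(m,g)$ for non-constant $g$, which fails, e.g., for a non-constant locally constant $g$ when $\mathfrak{F}$ is a Cantor set. It also never passes from the scales $|a|^m$ to arbitrary $\delta$. Your projection bound $2\eta\le\overline{\dim}_B(\mathfrak{Gr}(g))$ and your monotonicity remark settle both points.

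There is, however, a gap that you inherit from Lemma~\ref{lem3.1} rather than create.

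\begin{itemize}
\item Your identity $\log(N^2)/(-\log|a|)=2\eta$ needs every $|a^{(i)}|$ to equal $|a|$. From $\sum_i|a^{(i)}|^{\eta}=1$ one only gets $N|a|^{\eta}\ge1$, with strict inequality for unequal ratios.
\item The lower bound $|a|^{-m}Osc(m,g)\lesssim N_{|a|^m}(\mathfrak{Gr}(g))$, which you apply to $g$ and $h$, rests on the intermediate value property on each cylinder, i.e.\ on connectedness of $\mathfrak{F}$.
\end{itemize}

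Without connectedness the proposition itself fails. Take the middle-thirds Cantor set and write $\mathbf{x}=\sum_j 2\epsilon_j 3^{-j}$ with $\epsilon_j\in\{0,1\}$. Split the Cantor function $\sum_j 2^{-j}\epsilon_j$ into $g=\sum_{k\ge1}4^{-k}\epsilon_{2k}$ and $h=\sum_{k\ge0}2^{-2k-1}\epsilon_{2k+1}$. Over each level-$2p$ cylinder, the values of $g$ (resp.\ $h$) form a copy of a $\tfrac12$-dimensional Cantor set scaled by a factor $\asymp 4^{-p}$. Hence $N_{9^{-p}}(\mathfrak{Gr}(g))\asymp 4^p(3/2)^p=6^p$, and so $\dim_B\mathfrak{Gr}(g)=\dim_B\mathfrak{Gr}(h)=\log 6/\log 9<1=\dim_B\mathfrak{Gr}(g+h)$. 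Composing with $(\mathbf{x},\mathbf{y})\mapsto\mathbf{x}$ carries this to $\mathfrak{F}\times\mathfrak{F}$, with every dimension raised by $\eta$.

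So your proof, like the paper's, is sound for homogeneous connected attractors such as $[0,1]$ and $SG$, but not in the generality in which the statement is written.
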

\begin{proof}
Let us prove both the parts.
\begin{enumerate}
    \item  Choose $\delta=|a|^m$. Then, by Lemma \ref{lem3.1}, we get
   $$ \frac{1}{|a|^m}\sum_{\omega,\tau\in \Sigma^m}\mathcal{R}_g[L_\omega(\mathfrak{F})\times L_\tau (\mathfrak{F})]\le N_\delta\big(Gr(g)\big)\le 2.N^m.N^m+\frac{1}{|a|^m}\sum_{\omega,\tau\in \Sigma^m}\mathcal{R}_g[L_\omega(\mathfrak{F})\times L_\tau (\mathfrak{F})].$$
 Thus, for a non-constant function $g$, we may say
        \begin{eqnarray}\label{eqn21}
            N_\delta\big(Gr(g)\big)\asymp \frac{1}{|a|^m}\sum_{\omega,\tau\in \Sigma^m}\mathcal{R}_g[L_\omega(\mathfrak{F})\times L_\tau (\mathfrak{F})],
        \end{eqnarray}
        that is, there exist constants $\delta_g,K_g>0$ with $\frac{1}{|a|^m}<\delta_g$, then we have
        $$\frac{1}{K_g}\le \frac{N_\delta\big(Gr(g)\big)}{\frac{1}{|a|^m}\sum_{\omega,\tau\in \Sigma^m}\mathcal{R}_g[L_\omega(\mathfrak{F})\times L_\tau (\mathfrak{F})]} \le K_g.$$
        For $\mathfrak{t}=\max\{\overline{\dim}_B\big(\mathfrak{Gr}(g)\big),\overline{\dim}_B\big(\mathfrak{Gr}(h)\big)\}$, $\epsilon>0$ and \eqref{eqn21}, there exists $\delta_0>0$ and for large $m$ we get 
        \begin{eqnarray*}
            \sum_{\omega,\tau\in \Sigma^m}\mathcal{R}_g[L_\omega(\mathfrak{F})\times L_\tau (\mathfrak{F})]&\le& \bigg(\frac{1}{|a|^m}\bigg)^{\overline{\dim}_B\big(\mathfrak{Gr}(g)\big)+\epsilon-1}\le \bigg(\frac{1}{|a|^m}\bigg)^{\mathfrak{t}+\epsilon-1},\\
            \sum_{\omega,\tau\in \Sigma^m}\mathcal{R}_h[L_\omega(\mathfrak{F})\times L_\tau (\mathfrak{F})]&\le& \bigg(\frac{1}{|a|^m}\bigg)^{\overline{\dim}_B\big(\mathfrak{Gr}(h)\big)+\epsilon-1}\le \bigg(\frac{1}{|a|^m}\bigg)^{\mathfrak{t}+\epsilon-1}.
        \end{eqnarray*}
        Thus,
        \begin{eqnarray*}
            \frac{1}{|a|^m}\sum_{\omega,\tau\in \Sigma^m}\mathcal{R}_{g+h}[L_\omega(\mathfrak{F})\times L_\tau (\mathfrak{F})]&\le& \frac{1}{|a|^m}\sum_{\omega,\tau\in \Sigma^m}\mathcal{R}_g[L_\omega(\mathfrak{F})\times L_\tau (\mathfrak{F})]\\
            &&+\frac{1}{|a|^m}\sum_{\omega,\tau\in \Sigma^m}\mathcal{R}_h[L_\omega(\mathfrak{F})\times L_\tau (\mathfrak{F})]\\
            &\le& 2~\bigg(\frac{1}{|a|^m}\bigg)^{\mathfrak{t}+\epsilon},
        \end{eqnarray*}
        implies that $\overline{\dim}_B\big(\mathfrak{Gr}(g+h)\big)\le \mathfrak{t}+\epsilon$. As it is true for all $\epsilon>0$, $\overline{\dim}_B\big(\mathfrak{Gr}(g+h)\big)\le \mathfrak{t}$, completes the assertion.
        \item We prove this part by contradiction. Thus, for $g,h \in \mathcal{C}(\mathfrak{F} \times \mathfrak{F})$, without loss of generality, assume that $\overline{\dim}_B\big(\mathfrak{Gr}(g)\big)< \overline{\dim}_B\big(\mathfrak{Gr}(h)\big)$. Let $f\in \mathcal{C}(\mathfrak{F} \times \mathfrak{F})$ with $f=g+h$, where $\overline{\dim}_B\big(\mathfrak{Gr}(f)\big)\ne \overline{\dim}_B\big(\mathfrak{Gr}(h)\big)$. Then, by Item \ref{prop23_it1}, $\overline{\dim}_B\big(\mathfrak{Gr}(f)\big)< \overline{\dim}_B\big(\mathfrak{Gr}(h)\big)$, which is a contradiction as
        \begin{eqnarray*}
            \overline{\dim}_B\big(\mathfrak{Gr}(f-g)\big)&=&\overline{\dim}_B\big(\mathfrak{Gr}(h)\big)\\
            &>& \max\{\overline{\dim}_B\big(\mathfrak{Gr}(f)\big),\overline{\dim}_B\big(\mathfrak{Gr}(-g)\big)\}
        \end{eqnarray*}
\end{enumerate}
\end{proof}
\begin{remark}
    Let $k\in \mathbb{R}\backslash \{0\}$. If $\overline{\dim}_B\big(\mathfrak{Gr}(g)\big)=\xi$, then $$\overline{\dim}_B\big(\mathfrak{Gr}(kg)\big)=\xi.$$
    Moreover, for $k=0$, then $\overline{\dim}_B\big(\mathfrak{Gr}(\mathbf{0})\big)=\eta.$
\end{remark}
\begin{proof}
    For $k\ne 0$, let us define a map $\mathcal{T}: \mathfrak{Gr}(g)\to \mathfrak{Gr}(kg)$ as:
    $$\mathcal{T}(\mathbf{x},\mathbf{y},g(\mathbf{x},\mathbf{y}))=(\mathbf{x},\mathbf{y},kg(\mathbf{x},\mathbf{y})).$$
    It is easy to see that $\mathcal{T}$ is a bijection and a bi-Lipschitz map, which proves the assertion.
\end{proof}
\begin{proposition}\label{prop24}
    Let $g,h \in \mathcal{C}(\mathfrak{F}\times \mathfrak{F})$. Then
    \begin{enumerate}
        \item For each $k\in \mathbb{R}$ with two exceptions of $k=0$ and another value of $k$, we get
        $$\overline{\dim}_B\big(\mathfrak{Gr}(g+kh)\big)=\max\{\overline{\dim}_B\big(\mathfrak{Gr}(g)\big),\overline{\dim}_B\big(\mathfrak{Gr}(h)\big)\}.$$
        \item For $\mathscr{L}^1$-almost all $k\in \mathbb{R}$, we have
        $$\underline{\dim}_B\big(\mathfrak{Gr}(g+h)\big)\ge \max\{\underline{\dim}_B\big(\mathfrak{Gr}(g)\big),\underline{\dim}_B\big(\mathfrak{Gr}(h)\big)\},$$
        here $\mathscr{L}^1$ denotes the Lebesgue measure on $\mathbb{R}$.
    \end{enumerate}
    In particular, for $\mathscr{L}^1$-almost all $k\in \mathbb{R}$, one can have
    \begin{eqnarray*}
        \max\{\underline{\dim}_B\big(\mathfrak{Gr}(g)\big),\underline{\dim}_B\big(\mathfrak{Gr}(h)\big)\}\le \underline{\dim}_B\big(\mathfrak{Gr}(g+kh)\big)&\le& \overline{\dim}_B\big(\mathfrak{Gr}(g+kh)\big)\\
        &=&\max\{\overline{\dim}_B\big(\mathfrak{Gr}(g)\big),\overline{\dim}_B\big(\mathfrak{Gr}(h)\big)\}.
    \end{eqnarray*}
\end{proposition}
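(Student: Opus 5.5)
Both parts will be proved by pushing everything through the oscillation sums of Lemma~\ref{lem3.1}, together with Proposition~\ref{prop23} and the Remark on scalar multiples. Write $\mathfrak{t}=\max\{\overline{\dim}_B(\mathfrak{Gr}(g)),\overline{\dim}_B(\mathfrak{Gr}(h))\}$. The second item in the statement is clearly intended for $g+kh$, and I will prove it in that form.

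\emph{Part 1.} By Proposition~\ref{prop23}\eqref{prop23_it1} and the Remark, $\overline{\dim}_B(\mathfrak{Gr}(g+kh))\le\mathfrak{t}$ for every $k$, so only the reverse inequality needs work. I argue by contradiction. Suppose two distinct nonzero values $k_1\ne k_2$ both satisfy $\overline{\dim}_B(\mathfrak{Gr}(g+k_ih))<\mathfrak{t}$. Then
$$h=\frac{(g+k_1h)-(g+k_2h)}{k_1-k_2},\qquad g=\frac{k_2(g+k_1h)-k_1(g+k_2h)}{k_2-k_1}.$$
By Proposition~\ref{prop23}\eqref{prop23_it1} and the Remark, both $\overline{\dim}_B(\mathfrak{Gr}(g))$ and $\overline{\dim}_B(\mathfrak{Gr}(h))$ are then strictly less than $\mathfrak{t}$. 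This is impossible, because $\mathfrak{t}$ is the larger of the two.

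The case $k=0$ is a genuine exception: $\overline{\dim}_B(\mathfrak{Gr}(g+0\cdot h))=\overline{\dim}_B(\mathfrak{Gr}(g))$, which may be smaller than $\mathfrak{t}$. Hence equality holds for all $k$ except $k=0$ and at most one further value.

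\emph{Part 2.} This is the harder step, since lower dimensions are not subadditive. I would fix $s<\underline{\dim}_B(\mathfrak{Gr}(h))$; the case $s<\underline{\dim}_B(\mathfrak{Gr}(g))$ is symmetric. By Lemma~\ref{lem3.1}, for all large $m$,
$$\sum_{\omega,\tau}\mathcal{R}_h\bigl[L_\omega(\mathfrak{F})\times L_\tau(\mathfrak{F})\bigr]\gtrsim |a|^{-m(s-1)}.$$
The key step, which I expect to be the main obstacle, is a cell-by-cell lower bound on oscillations. On each cell $Q=L_\omega(\mathfrak{F})\times L_\tau(\mathfrak{F})$ choose points $p,p'$ with $|h(p)-h(p')|=\mathcal{R}_h[Q]$. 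Then $\mathcal{R}_{g+kh}[Q]\ge|\alpha_Q+k\beta_Q|$, where $\beta_Q=h(p)-h(p')$ and $\alpha_Q=g(p)-g(p')$.

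For a bounded interval $I\subset\mathbb{R}$ of $k$ and a small $\varepsilon_m>0$, the set of $k\in I$ with $|\alpha_Q+k\beta_Q|<\varepsilon_m|\beta_Q|$ has measure at most $2\varepsilon_m$. Integrating over $k\in I$ and summing over $Q$ gives
$$\int_I \mathrm{Osc}(m,g+kh)\,dk\;\gtrsim\;\sum_Q|\beta_Q|.$$
A Markov/Chebyshev estimate then shows that the exceptional set
$$E_m=\Bigl\{k\in I:\mathrm{Osc}(m,g+kh)<m^{-2}\,\mathrm{Osc}(m,h)\Bigr\}$$
has measure $O(m^{-2})$. To get this, I would bound the bad-cell contribution by working with the quantity $\int_I \min(1,|\alpha+k\beta|/|\beta|)\,dk\ge c|I|$ rather than with the integral of $|\alpha+k\beta|$ itself; the latter is the wrong direction for Markov.

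Since $\sum_m m^{-2}<\infty$, Borel--Cantelli implies that almost every $k$ lies in only finitely many $E_m$. For such $k$, Lemma~\ref{lem3.1} gives $N_{|a|^m}(\mathfrak{Gr}(g+kh))\gtrsim m^{-2}|a|^{-ms}$ for all large $m$. Passing from the scales $|a|^m$ to general $\delta$ costs only a bounded factor, so $\underline{\dim}_B(\mathfrak{Gr}(g+kh))\ge s$. Taking countable unions over rational $s$ and over the intervals $I=[-n,n]$ yields the almost-everywhere statement.

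\emph{The ``in particular'' chain.} This follows by combining the two parts, since Part~1 excludes only a null set of $k$.
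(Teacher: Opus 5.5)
\textbf{Gap in Part 2.} Your Part 1 is correct, and so is the Borel--Cantelli framework of Part 2. The step that fails is the one you flag yourself: showing $\mathscr{L}^1(E_m)=O(m^{-2})$. A lower bound on $\int_I \mathrm{Osc}(m,g+kh)\,dk$ gives no control on the measure of the set where the integrand is small. Your proposed remedy has the same defect, because $\int_I\min(1,|\alpha_Q+k\beta_Q|/|\beta_Q|)\,dk\ge c|I|$ is again a lower bound on an integral. Since $\sum_Q\mathcal{R}_h[Q]\min(1,\cdot)\le \mathrm{Osc}(m,h)$, a reverse-Markov argument from it yields only that the bad set occupies at most a fixed proportion of $I$. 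Indeed, per cell the deficit $\int_{\mathbb{R}}(1-\min(1,|k-c_Q|))\,dk$ equals $1$, not something small. That bound is not summable in $m$, so Borel--Cantelli cannot be invoked.

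\textbf{First fix.} The per-cell estimate you state first (the bad set of each cell has measure at most $2\varepsilon_m$) is the right ingredient. It just has to be fed into Markov in the correct direction. Put $G_m(k)=\sum_Q\mathcal{R}_h[Q]\,\mathbf{1}\{|\alpha_Q+k\beta_Q|<\varepsilon_m\mathcal{R}_h[Q]\}$. Then $\int_{\mathbb{R}}G_m\,dk\le 2\varepsilon_m\,\mathrm{Osc}(m,h)$, so Markov gives $\mathscr{L}^1\{G_m\ge\frac{1}{2}\mathrm{Osc}(m,h)\}\le 4\varepsilon_m$. Off this set, every good cell contributes at least $\varepsilon_m\mathcal{R}_h[Q]$, so $\mathrm{Osc}(m,g+kh)\ge\varepsilon_m\bigl(\mathrm{Osc}(m,h)-G_m(k)\bigr)>\frac{\varepsilon_m}{2}\mathrm{Osc}(m,h)$. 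Equivalently, you can truncate at scale $\varepsilon_m$, using $\min(1,|k-c_Q|/\varepsilon_m)$, whose deficit integrates to $\varepsilon_m$ per cell.

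\textbf{Simpler fix.} This avoids probability and bounded $I$ altogether. Since $(g+kh)-(g+k'h)=(k-k')h$, subadditivity of oscillation gives $\mathcal{R}_{g+kh}[Q]+\mathcal{R}_{g+k'h}[Q]\ge |k-k'|\,\mathcal{R}_h[Q]$. Summing over $Q$, the set $\{k\in\mathbb{R}:\mathrm{Osc}(m,g+kh)<\varepsilon\,\mathrm{Osc}(m,h)\}$ has diameter less than $2\varepsilon$, and with $\varepsilon=m^{-2}$ Borel--Cantelli applies directly.

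\textbf{The $g$-side.} It is not literally symmetric. Either write $g+kh=k(h+k^{-1}g)$ and note that $k\mapsto k^{-1}$ preserves null sets. Or use $k'(g+kh)-k(g+k'h)=(k'-k)g$, which on $[-n,n]$ confines the bad set to an interval of length less than $2n\varepsilon$.

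The paper itself gives no argument here; it only refers to Lemmas 2.3--2.4 of Falconer--Fraser. So this step has to be written out explicitly, and the version above is the cleanest way to do it.
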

\begin{proof}
    For the proof of this, one may use Item $(1)$ of Lemma \ref{prop23} followed by the similar steps of the proof of Lemma $2.3$ and Lemma $2.4$ of \cite{FF}. 
\end{proof}
\begin{note} \label{note25}
    In a similar way, one may also prove the Proposition \ref{prop23} and Proposition \ref{prop24} for $\mathfrak{F}^m~(1\le m\le N)$.
\end{note}

To scrutinize the prevalent subsets of $\mathcal{D}_{\xi}(\mathfrak{F} \times \mathfrak{F})$, we must demonstrate that $\mathcal{D}_{\xi}(\mathfrak{F} \times \mathfrak{F})$ is a completely metrizable topological vector space. The part of showing $\mathcal{D}_{\xi}(\mathfrak{F} \times \mathfrak{F})$ a vector space has already been covered in Proposition \ref{prop23}, and now we shall build an appropriate metric. Thus, let us define the oscillation space $\mathbb{O}^{\xi}(\mathfrak{F} \times \mathfrak{F}),\text{ for}~\xi \in [2\eta,2\eta+1]$, by 

$$ \mathbb{O}^{\xi}(\mathfrak{F} \times \mathfrak{F})= \Big\{g \in \mathcal{C}(\mathfrak{F} \times \mathfrak{F}): \sup_{m \in \mathbb{N} }\dfrac{ Osc(m,g)}{|a|^{-m\left(1+4\eta-\xi\right)}}< \infty \Big\}.$$

\begin{remark}
    Observe that while defining the oscillation space on $\mathfrak{F} \times \mathfrak{F}$, we consider the exponent in the denominator so as $1+4\eta-\xi$ to cover all the values in $[2\eta,2\eta+1]$.
\end{remark}
We are recalling one lemma that can be proven via applying the simple definition.
\begin{lemma}\label{lem2}
    Let $c\in\mathbb{R}$ and let $f_1,f_2\in\mathcal{C}(\mathfrak{F} \times \mathfrak{F})$. Then, for a fixed $m\in\mathbb{N}$, we get:
    \begin{enumerate}
        \item $Osc(m,cf_1)=|c|~Osc(m,f_1).$
        \item  $Osc(m,f_1+f_2)=Osc(m,f_1)+Osc(m,f_2).$
        \item $Osc(m,f_1f_2)\leq\|f_1\|_{\infty}~Osc(m,f_2)+\|f_2\|_{\infty}~Osc(m,f_1).$
    \end{enumerate}
\end{lemma}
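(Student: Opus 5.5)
The plan is to prove all three items cell by cell, at the level of the local oscillation $\mathcal{R}_f[L_\omega(\mathfrak{F})\times L_\tau(\mathfrak{F})]$ for fixed $\omega,\tau\in\Sigma^m$. Summing over all pairs $(\omega,\tau)\in\Sigma^m\times\Sigma^m$ then transfers each statement to $Osc(m,\cdot)$, since $Osc(m,\cdot)$ is a finite sum of these local oscillations with nonnegative terms. Throughout, write $Q=L_\omega(\mathfrak{F})\times L_\tau(\mathfrak{F})$, and let $P=(\mathbf{x},\mathbf{y})$ and $\hat P=(\hat{\mathbf{x}},\hat{\mathbf{y}})$ range over $Q$.

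For item (1), I use $|cf_1(P)-cf_1(\hat P)|=|c|\,|f_1(P)-f_1(\hat P)|$. Taking the supremum over $P,\hat P\in Q$ gives $\mathcal{R}_{cf_1}[Q]=|c|\,\mathcal{R}_{f_1}[Q]$, including the case $c=0$. Summing over $\omega,\tau$ yields $Osc(m,cf_1)=|c|\,Osc(m,f_1)$.

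For item (2), the triangle inequality gives
$$|(f_1+f_2)(P)-(f_1+f_2)(\hat P)|\le |f_1(P)-f_1(\hat P)|+|f_2(P)-f_2(\hat P)|.$$
The right-hand side is at most $\mathcal{R}_{f_1}[Q]+\mathcal{R}_{f_2}[Q]$. Taking the supremum over $P,\hat P\in Q$ and summing over cells gives $Osc(m,f_1+f_2)\le Osc(m,f_1)+Osc(m,f_2)$.

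Here I expect the one genuine obstacle: the equality sign in item (2) cannot hold in general. For example, with $f_2=-f_1$ and $f_1$ non-constant, the left side is $0$ while the right side is positive. So I would prove item (2) in this subadditive form. That is all that is needed later: it makes $g\mapsto\sup_m Osc(m,g)\,|a|^{m(1+4\eta-\xi)}$ a seminorm on $\mathbb{O}^{\xi}(\mathfrak{F}\times\mathfrak{F})$, and it is also the form used in the proof of Proposition \ref{prop23}.

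For item (3), I split the difference as
$$f_1f_2(P)-f_1f_2(\hat P)=f_1(P)\big(f_2(P)-f_2(\hat P)\big)+f_2(\hat P)\big(f_1(P)-f_1(\hat P)\big).$$
Since $|f_1(P)|\le\|f_1\|_\infty$ and $|f_2(\hat P)|\le\|f_2\|_\infty$, with both sup norms finite because $\mathfrak{F}\times\mathfrak{F}$ is compact and the functions are continuous, this gives
$$|f_1f_2(P)-f_1f_2(\hat P)|\le \|f_1\|_\infty\,\mathcal{R}_{f_2}[Q]+\|f_2\|_\infty\,\mathcal{R}_{f_1}[Q].$$
Taking the supremum over $P,\hat P\in Q$ and summing over all $(\omega,\tau)$ gives the stated bound $Osc(m,f_1f_2)\le\|f_1\|_\infty Osc(m,f_2)+\|f_2\|_\infty Osc(m,f_1)$.
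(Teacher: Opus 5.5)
Your argument is correct and is the direct cell-by-cell verification from the definition of $\mathcal{R}_f$ that the paper has in mind; the paper gives no written proof beyond saying the lemma follows from the definition. You are also right that item (2) is false as an equality and only holds as the subadditive bound $Osc(m,f_1+f_2)\le Osc(m,f_1)+Osc(m,f_2)$, which is all that Proposition \ref{prop23} and the completeness of $\mathbb{O}^{\xi}(\mathfrak{F}\times\mathfrak{F})$ actually use.

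One small tightening of your counterexample $f_2=-f_1$: you need $f_1$ to be non-constant on at least one level-$m$ cell $L_\omega(\mathfrak{F})\times L_\tau(\mathfrak{F})$, for instance a coordinate of $\mathbf{x}$ that varies on $\mathfrak{F}$. Global non-constancy is not enough, because for a totally disconnected $\mathfrak{F}$ a non-constant continuous function can be constant on every level-$m$ cell, and then both sides vanish.
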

\begin{lemma}
    Consider a sequence $\{g_n\}_{n=1}^\infty$ from $\mathcal{C}(\mathfrak{F}\times\mathfrak{F})$, which is uniformly convergent to a function $g:\mathfrak{F\times F}\to \mathbb{R}$. Then,
    $$Osc(m,g_n)\to Osc(m,g).$$
    Furthermore, if this uniform convergence is in $\mathbb{O}^{\xi}(\mathfrak{F} \times \mathfrak{F})$, then
    $$\sup_{m\in\mathbb{N}}\dfrac{ Osc(m,g)}{|a|^{-m\left(1+4\eta-\xi\right)}}\le  \liminf_{n\to \infty}\sup_{m\in\mathbb{N}}\dfrac{ Osc(m,g_n)}{|a|^{-m\left(1+4\eta-\xi\right)}}.$$
\end{lemma}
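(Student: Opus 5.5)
The first claim will follow from continuity of $Osc(m,\cdot)$ in the sup norm for each fixed $m$. The second claim is a lower semicontinuity statement, obtained by passing to the limit inside the supremum.

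\textbf{Step 1 (fixed $m$).} Fix $m\in\mathbb{N}$. For any cells $\omega,\tau\in\Sigma^m$ and any two points $(\mathbf{x},\mathbf{y}),(\hat{\mathbf{x}},\hat{\mathbf{y}})\in L_\omega(\mathfrak{F})\times L_\tau(\mathfrak{F})$, the triangle inequality gives
$$|g(\mathbf{x},\mathbf{y})-g(\hat{\mathbf{x}},\hat{\mathbf{y}})|\le |g_n(\mathbf{x},\mathbf{y})-g_n(\hat{\mathbf{x}},\hat{\mathbf{y}})|+2\|g_n-g\|_\infty .$$
Taking suprema yields $\mathcal{R}_g\le \mathcal{R}_{g_n}+2\|g_n-g\|_\infty$, and by symmetry
$$\big|\mathcal{R}_g[L_\omega(\mathfrak{F})\times L_\tau(\mathfrak{F})]-\mathcal{R}_{g_n}[L_\omega(\mathfrak{F})\times L_\tau(\mathfrak{F})]\big|\le 2\|g_n-g\|_\infty .$$
Since $g$ is a uniform limit of continuous functions, it is continuous, so every oscillation is finite. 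Summing over the $N^{2m}$ pairs $(\omega,\tau)$ gives
$$|Osc(m,g)-Osc(m,g_n)|\le 2N^{2m}\|g_n-g\|_\infty\to 0 .$$
This proves the first assertion.

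\textbf{Step 2 (supremum over $m$).} Write $w_m:=|a|^{-m(1+4\eta-\xi)}$. Set $M:=\liminf_{n\to\infty}\sup_{k}Osc(k,g_n)/w_k$; if $M=\infty$ there is nothing to prove. For every $m$ and every $n$ we have $Osc(m,g_n)/w_m\le \sup_k Osc(k,g_n)/w_k$. Letting $n\to\infty$ along a subsequence realizing the $\liminf$, and using Step 1, gives $Osc(m,g)/w_m\le M$. Since $m$ was arbitrary, taking the supremum over $m$ yields the inequality. In particular $g\in\mathbb{O}^{\xi}(\mathfrak{F}\times\mathfrak{F})$ whenever the right-hand side is finite.

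\textbf{Main obstacle.} The only subtle point is that the convergence in Step 1 is not uniform in $m$, because of the factor $N^{2m}$. So one must not try to interchange the limit with the supremum. Using the pointwise-in-$m$ bound followed by the supremum, as in Step 2, avoids this and explains why only an inequality, not equality, is obtained.
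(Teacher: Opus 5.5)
Your proof is correct and follows the paper's argument essentially step for step. Both use the cellwise triangle-inequality bound $|\mathcal{R}_{g_n}-\mathcal{R}_g|\le 2\|g_n-g\|_\infty$, sum it for fixed $m$, pass to the limit in $n$ termwise, and only then take the supremum over $m$. Your count of $N^{2m}$ pairs $(\omega,\tau)$, which gives the bound $2N^{2m}\|g_n-g\|_\infty$, is the right one; the paper writes $4N^m$, a harmless slip since $m$ is fixed.
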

\begin{proof}
    Let $\Upsilon_n=\sup_{(\mathbf{x},\mathbf{y})\in \mathfrak{F} \times \mathfrak{F}}|g_n(\mathbf{x},\mathbf{y}) -g(\mathbf{x},\mathbf{y})|$. By the uniform convergence of $\{g_n\}$, it is evident that $\Upsilon_n\to 0$ as $n\to\infty$. Let $(\mathbf{x},\mathbf{y}),(\widehat{\mathbf{x}},\widehat{\mathbf{y}})\in L_\omega(\mathfrak{F})\times L_\tau\mathfrak({F})$, then
    \begin{eqnarray*}
        |g_n(\mathbf{x},\mathbf{y})-g_n(\widehat{\mathbf{x}},\widehat{\mathbf{y}})|&\le&|g(\mathbf{x},\mathbf{y})-g(\widehat{\mathbf{x}},\widehat{\mathbf{y}})|+|g_n(\mathbf{x},\mathbf{y})-g(\mathbf{x},\mathbf{y})|+|g_n(\widehat{\mathbf{x}},\widehat{\mathbf{y}})-g(\widehat{\mathbf{x}},\widehat{\mathbf{y}})|\\
        &\le& |g(\mathbf{x},\mathbf{y})-g(\widehat{\mathbf{x}},\widehat{\mathbf{y}})|+2\Upsilon_n.
    \end{eqnarray*}
    On combining this with the definition of $\mathcal{R}_g$, we have
    \begin{eqnarray*}
        \mathcal{R}_{g_n}[L_\omega(\mathfrak{F})\times L_\tau\mathfrak({F})]\le \mathcal{R}_{g}[L_\omega(\mathfrak{F})\times L_\tau\mathfrak({F})]+2\Upsilon_n.
    \end{eqnarray*}
    By reversing the role of $g$ and $g_n$, we get
    \begin{eqnarray*}
        \mathcal{R}_{g}[L_\omega(\mathfrak{F})\times L_\tau\mathfrak({F})]\le \mathcal{R}_{g_n}[L_\omega(\mathfrak{F})\times L_\tau\mathfrak({F})]+2\Upsilon_n.
    \end{eqnarray*}
    Thus,
    \begin{eqnarray*}
        |\mathcal{R}_{g_n}[L_\omega(\mathfrak{F})\times L_\tau\mathfrak({F})]- \mathcal{R}_{g}[L_\omega(\mathfrak{F})\times L_\tau\mathfrak({F})]|\le 2\Upsilon_n.
    \end{eqnarray*}
    Now, consider
    \begin{eqnarray*}
        |Osc (m,g_n)- Osc (m,g)| &=& \Bigg{|}\sum_{\omega,\tau\in\Sigma^m}\mathcal{R}_{g_n}[L_\omega(\mathfrak{F})\times L_\tau\mathfrak({F})]- \sum_{\omega,\tau\in\Sigma^m}\mathcal{R}_{g}[L_\omega(\mathfrak{F})\times L_\tau\mathfrak({F})]\Bigg{|} \\ 
                     &\le& 2\Upsilon_n\sum_{\omega,\tau\in\Sigma^m} 1\\
                     &=& 4 N^m\Upsilon_n\\
                     &\to& 0,
    \end{eqnarray*}
    as $n \to \infty$.
    For the next part, assume that $\{g_n\}_{n=1}^\infty$ converges uniformly to $g$ in $\mathbb{O}^\xi(\mathfrak{F} \times \mathfrak{F})$ and on combining this with the above result, we get
    \begin{eqnarray*}
        \frac{Osc(m,g)}{|a|^{-m\left(1+4\eta-\xi\right)}}&=&\lim_{n\to \infty} \frac{Osc(m,g_n)}{|a|^{-m\left(1+4\eta-\xi\right)}}\\
        &\le& \sup_{m\in\mathbb{N}}\lim_{n\to \infty} \frac{Osc(m,g_n)}{|a|^{-m\left(1+4\eta-\xi\right)}}\\
        &\le& \liminf_{n\to \infty}\sup_{m\in\mathbb{N}} \frac{Osc(m,g_n)}{|a|^{-m\left(1+4\eta-\xi\right)}},
    \end{eqnarray*}
    for every $m\in \mathbb{N}$. Therefore, on taking the supremum over $m$ on the left-hand side, we get the required inequality.
\end{proof}
\begin{proposition}
    For each $\xi \in [2\eta,2\eta+1]$, the space $\mathbb{O}^{\xi}(\mathfrak{F}\times \mathfrak{F})$ equipped with the norm 
    $$\|g\|_{\mathbb{O}^{\xi}}:= \|g\|_{\infty} + \sup_{m \in \mathbb{N}}\dfrac{ Osc(m,g)}{|a|^{-m\left(1+4\eta-\xi\right)}},$$
    is a Banach space. 
\end{proposition}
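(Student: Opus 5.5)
The plan is the standard two-stage argument: check that $\|\cdot\|_{\mathbb{O}^{\xi}}$ is a norm on $\mathbb{O}^{\xi}(\mathfrak{F}\times\mathfrak{F})$, then prove completeness using the fact that $\big(\mathcal{C}(\mathfrak{F}\times\mathfrak{F}),\|\cdot\|_\infty\big)$ is already complete, with the preceding lemma supplying lower semicontinuity of the oscillation seminorm.

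\textbf{Norm axioms.} Write $[g]_\xi:=\sup_{m}Osc(m,g)\,|a|^{m(1+4\eta-\xi)}$.

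\begin{itemize}
\item Homogeneity $\|cg\|_{\mathbb{O}^\xi}=|c|\,\|g\|_{\mathbb{O}^\xi}$ follows from Lemma~\ref{lem2}(1).
\item For the triangle inequality I will not use the literal equality in Lemma~\ref{lem2}(2), which cannot hold in general. I will use the inequality $\mathcal{R}_{f_1+f_2}\le \mathcal{R}_{f_1}+\mathcal{R}_{f_2}$ on each cell $L_\omega(\mathfrak{F})\times L_\tau(\mathfrak{F})$. This gives $Osc(m,f_1+f_2)\le Osc(m,f_1)+Osc(m,f_2)$, hence $[f_1+f_2]_\xi\le[f_1]_\xi+[f_2]_\xi$.
\item Definiteness holds because $\|g\|_{\mathbb{O}^\xi}=0$ forces $\|g\|_\infty=0$.
\end{itemize}

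These facts also show that $\mathbb{O}^\xi$ is a linear subspace of $\mathcal{C}(\mathfrak{F}\times\mathfrak{F})$.

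\textbf{Completeness.} Let $\{g_n\}$ be Cauchy in $\|\cdot\|_{\mathbb{O}^\xi}$.

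\begin{enumerate}
\item Since $\|\cdot\|_\infty\le\|\cdot\|_{\mathbb{O}^\xi}$, the sequence is uniformly Cauchy. Completeness of $\mathcal{C}(\mathfrak{F}\times\mathfrak{F})$, which holds because $\mathfrak{F}\times\mathfrak{F}$ is compact, gives a continuous $g$ with $g_n\to g$ uniformly.
\item The Cauchy property also bounds $\sup_n[g_n]_\xi<\infty$. The second part of the preceding lemma then yields $[g]_\xi\le\liminf_n[g_n]_\xi<\infty$, so $g\in\mathbb{O}^\xi$.
\item Fix $\epsilon>0$ and choose $n_0$ with $\|g_n-g_k\|_{\mathbb{O}^\xi}<\epsilon$ for $n,k\ge n_0$. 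For fixed $n\ge n_0$, the sequence $g_k-g_n$ converges uniformly to $g-g_n$ as $k\to\infty$.
\item Applying the first part of the preceding lemma to $g_k-g_n$ and arguing $m$ by $m$ gives
$$Osc(m,g-g_n)|a|^{m(1+4\eta-\xi)}=\lim_{k\to\infty}Osc(m,g_k-g_n)|a|^{m(1+4\eta-\xi)}\le\epsilon.$$
Taking the supremum over $m$ shows $[g-g_n]_\xi\le\epsilon$, and together with $\|g-g_n\|_\infty\le\epsilon$ this gives $\|g-g_n\|_{\mathbb{O}^\xi}\le 2\epsilon$. Hence $g_n\to g$ in $\mathbb{O}^\xi$.
\end{enumerate}

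\textbf{Main obstacle.} The only delicate point is the passage to the limit in $k$ before taking the supremum over $m$. The convergence $Osc(m,\cdot)\to Osc(m,\cdot)$ holds for each fixed $m$, with an error of order $N^{2m}\Upsilon_k$ that is not uniform in $m$. So the estimate must be carried out for each fixed $m$ first, and the supremum taken only afterwards, exactly as in the lemma's proof. Each individual $m$-term is bounded by $\epsilon$ independently of $m$, so no uniformity in $m$ is needed and the argument goes through.
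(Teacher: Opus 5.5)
Your proposal is correct and follows the same route as the paper. You obtain the uniform limit $g$ from completeness of $\big(\mathcal{C}(\mathfrak{F}\times\mathfrak{F}),\|\cdot\|_\infty\big)$, then pass the Cauchy bound $\|g_n-g_k\|_{\mathbb{O}^{\xi}}<\epsilon$ to the limit $k\to\infty$, giving $\|g_n-g\|_{\mathbb{O}^{\xi}}\to 0$ and $g\in\mathbb{O}^{\xi}(\mathfrak{F}\times\mathfrak{F})$. Your execution is in fact tighter than the paper's, which writes $\|g_n-g\|_{\mathbb{O}^{\xi}}=\lim_{k\to\infty}\|g_n-g_k\|_{\mathbb{O}^{\xi}}$ as an equality, silently swapping $\lim_k$ and $\sup_m$, and relies on the additive equality of Lemma~\ref{lem2}(2); you instead take the limit for each fixed $m$ before the supremum and use only subadditivity.
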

\begin{proof}
    Consider $\{g_n\}_{n=1}^{\infty}$ is a Cauchy sequence in $\mathbb{O}^{\xi}(\mathfrak{F}\times \mathfrak{F})$. Then, for each $\epsilon>0$, there exists a natural no. $K$ with
    $$\|g_n-g_k\|_{\mathbb{O}^{\xi}}<\epsilon\quad\forall~n,k\ge K.$$
    It is also evident that $\|g_n-g_k\|_{\infty}<\epsilon\quad\forall~n,k\ge K$. Now, the completeness of $\mathcal{C}(\mathfrak{F\times F})$ w.r.t. $\|.\|_\infty$ guarantees the existence of a function $g\in \mathcal{C}(\mathfrak{F\times F})$ such that $\{g_n\}_{n=1}^{\infty}$ uniformly converges to $g$. Now, let $n\ge K$. By Lemma \ref{lem2}, we get
    \begin{eqnarray*}
        \|g_n-g\|_{\mathbb{O}^{\xi}}&=& \|g_n-g\|_{\infty} + \sup_{m \in \mathbb{N}}\dfrac{ Osc(m,g_n-g)}{|a|^{-m\left(1+4\eta-\xi\right)}}\\
        &=& \lim_{k\to \infty} \Bigg(\|g_n-g_k\|_{\infty} + \sup_{m \in \mathbb{N}}\dfrac{ Osc(m,g_n-g_k)}{|a|^{-m\left(1+4\eta-\xi\right)}}\Bigg)\\
        &=& \sup_{k\ge K} \Bigg(\|g_n-g_k\|_{\infty} + \sup_{\hat{m} \in \mathbb{N}}\dfrac{ Osc(\hat{m},g_n-g_k)}{|a|^{-\hat{m}\left(1+4\eta-\xi\right)}}\Bigg)\\
        &=&  \sup_{k\ge K} \|g_n-g_k\|_{\mathbb{O}^{\xi}}\\
        &\le& \epsilon.
    \end{eqnarray*}
    Thus, $g_n-g\in \mathbb{O}^{\xi}(\mathfrak{F}\times \mathfrak{F})$ for $n\ge K$. Also, $g_K-g\in \mathbb{O}^{\xi}(\mathfrak{F}\times \mathfrak{F})$. By Lemma \ref{lem2}, $g=g_K-g-g_K\in \mathbb{O}^{\xi}(\mathfrak{F}\times \mathfrak{F})$, which completes the proof. 
\end{proof}
Before establishing $\mathcal{D}_{\xi}(\mathfrak{F} \times \mathfrak{F})$ to be a completely metrizable topological vector space, let us recall the following result:
\begin{lemma}\cite{FF}\label{lem3.4}
    Let $(Y_n,\|.\|_n)$ be a decreasing sequence of a complete vector space, i.e., for each $n\in \mathbb{N}$ one can have $Y_n\ge Y_{n+1}$ and for $y\in Y_{n+1}$ one can have $\|y\|_{n+1}\ge\|y\|_n$. Then, $\bigcap_{n\in\mathbb{N}}Y_n$ is a complete metric space w.r.t. the norm $$\Theta(y_1,y_2)=\sum_{n=1}^\infty\min\big\{2^{-n},\|y_1-y_2\|_{n}\big\}.$$
\end{lemma}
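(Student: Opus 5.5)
The plan is to follow the argument of \cite{FF} for this lemma. Step one is that $\Theta$ is a metric on $Y:=\bigcap_{n\in\mathbb{N}}Y_n$. Each summand $\min\{2^{-n},\|y_1-y_2\|_n\}$ is symmetric and vanishes when $y_1=y_2$. Since $t\mapsto\min\{2^{-n},t\}$ is nondecreasing and subadditive on $[0,\infty)$, the triangle inequality for each $\|\cdot\|_n$ passes to every summand, and hence to the sum. The series converges because it is dominated by $\sum 2^{-n}=1$. If $\Theta(y_1,y_2)=0$, then already $\|y_1-y_2\|_1=0$, so $y_1=y_2$. Translation invariance is immediate, and together with continuity of scalar multiplication this gives a metrizable topological vector space structure, though only completeness is asked for.

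Step two links $\Theta$-convergence to norm convergence. Fix $n$. If $\Theta(y_1,y_2)<2^{-n}$, then the $n$-th summand is below $2^{-n}$, which forces $\|y_1-y_2\|_n<2^{n}\Theta(y_1,y_2)$. Conversely, given $\epsilon>0$, choose $M$ with $2^{-M}<\epsilon/2$. By monotonicity, $\|\cdot\|_k\le\|\cdot\|_M$ on $Y_M$ for $k\le M$. Hence $\|y_1-y_2\|_M<\epsilon/(2M)$ implies $\Theta(y_1,y_2)<\epsilon$. So a sequence in $Y$ is $\Theta$-Cauchy (respectively $\Theta$-convergent) if and only if it is Cauchy (respectively convergent) in every norm $\|\cdot\|_n$.

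Step three proves completeness. Take a $\Theta$-Cauchy sequence $(z_k)\subset Y$. By step two it is $\|\cdot\|_n$-Cauchy in $Y_n$ for every $n$, so by completeness it has a limit $z^{(n)}\in Y_n$.

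The main obstacle is showing that all the $z^{(n)}$ coincide; a priori the limits in different norms need not agree. The resolution uses the monotonicity hypothesis. Since $Y_{n+1}\subseteq Y_n$ and $\|\cdot\|_n\le\|\cdot\|_{n+1}$ on $Y_{n+1}$, convergence $z_k\to z^{(n+1)}$ in $\|\cdot\|_{n+1}$ implies convergence to the same element in $\|\cdot\|_n$. Uniqueness of limits in the normed space $Y_n$ then gives $z^{(n)}=z^{(n+1)}$. By induction there is a single $z$ lying in every $Y_n$, i.e.\ $z\in Y$, with $\|z_k-z\|_n\to0$ for all $n$. Step two then yields $\Theta(z_k,z)\to0$, which completes the proof.

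In the application, the $Y_n$ will be $\mathbb{O}^{\xi_n}$ with $\xi_n\downarrow\xi$. These are Banach spaces by the preceding proposition, and the inclusion and norm monotonicity follow from the exponent $1+4\eta-\xi$.
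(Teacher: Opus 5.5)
Your argument is correct and is the standard one behind the citation to \cite{FF}; the paper gives no proof of its own. The key points match: $\Theta$-Cauchy sequences are exactly those that are Cauchy in every $\|\cdot\|_n$, and the norm-nonincreasing inclusions $Y_{n+1}\hookrightarrow Y_n$ force the limits in the different $Y_n$ to coincide.

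Your closing remark about the application is wrong, though this does not affect your proof of the lemma. With the paper's denominator $|a|^{-m(1+4\eta-\xi)}$ and $|a|<1$, increasing $\xi$ shrinks $\mathbb{O}^{\xi}$. So $\mathbb{O}^{\xi+\frac{1}{n}}$ grows with $n$ and its norm decreases, which means the lemma's hypotheses fail as written. The exponent consistent with Lemma~\ref{lem3.1} would be $\xi-1$.
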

\begin{proposition} 
    For each $\xi \in [2\eta,2\eta+1]$, the space $\mathcal{D}_\xi(\mathfrak{F}\times \mathfrak{F})$ is a complete metric space corresponding to the norm
    $$\Theta_\xi(g,h)=\sum_{n=1}^\infty\min\big\{2^{-n},\|g-h\|_{\mathbb{O}^{\xi+\frac{1}{n}}}\big\}.$$
\end{proposition}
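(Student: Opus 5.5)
The plan is to apply Lemma \ref{lem3.4} with $Y_n=\mathbb{O}^{\xi+\frac1n}(\mathfrak{F}\times\mathfrak{F})$ and $\|\cdot\|_n=\|\cdot\|_{\mathbb{O}^{\xi+\frac1n}}$. Then we identify $\bigcap_n Y_n$ with $\mathcal{D}_\xi(\mathfrak{F}\times\mathfrak{F})$. For $\xi$ near $2\eta+1$ the indices $\xi+\frac1n$ may leave $[2\eta,2\eta+1]$. This is harmless: the definition of $\mathbb{O}^{\xi'}$ and the proof that it is a Banach space never used $\xi'\le 2\eta+1$. Alternatively one can start the sequence at an $n_0$ for which the indices stay in range; this only changes $\Theta_\xi$ by finitely many bounded terms and does not affect completeness or the topology.

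\textbf{Monotonicity.} Set $s_n=1+4\eta-\xi-\frac1n$, which increases with $n$. Since $|a|<1$, we have $|a|^{-ms_n}\le |a|^{-ms_{n+1}}$ for every $m$. Hence
$$\frac{Osc(m,g)}{|a|^{-ms_{n+1}}}\le \frac{Osc(m,g)}{|a|^{-ms_n}}.$$
Taking suprema gives $Y_n\subseteq Y_{n+1}$ and $\|g\|_{n+1}\le\|g\|_n$. So with this indexing the spaces actually increase. I would therefore check the exponent convention. With the intended reading, that $g\in\mathbb{O}^{\xi'}$ means $Osc(m,g)\lesssim |a|^{-m(\xi'-1)}$ up to the normalization built into $4\eta$, the spaces decrease in the way Lemma \ref{lem3.4} requires. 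I would fix this convention explicitly and verify $Y_{n+1}\subseteq Y_n$ and $\|y\|_{n+1}\ge\|y\|_n$ directly from the comparison of the denominators. Each $Y_n$ is complete by the preceding proposition, so Lemma \ref{lem3.4} makes $\bigcap_nY_n$ complete under $\Theta_\xi$.

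\textbf{Identification with $\mathcal{D}_\xi$.} Lemma \ref{lem3.1} with $\delta=|a|^m$ gives
$$N_\delta(\mathfrak{Gr}(g))\asymp |a|^{-2m\eta}+|a|^{-m}Osc(m,g),$$
where $N^{2m}$ is comparable to $|a|^{-2m\eta}$ by Note \ref{note2.2}. This uses the equal-ratio situation or the standard OSC counting; in general I would pass through the symbolic cylinders of diameter about $\delta$. Consequently
$$\overline{\dim}_B\,\mathfrak{Gr}(g)=\max\Big\{2\eta,\ 1+\limsup_{m\to\infty}\frac{\log Osc(m,g)}{-m\log|a|}\Big\}.$$
If $g\in Y_n$ for all $n$, then the $\limsup$ is at most $\xi-1+\frac1n$ for every $n$, so $\overline{\dim}_B\le\xi$. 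Conversely, if $\overline{\dim}_B\,\mathfrak{Gr}(g)\le\xi$, then for each $n$ we have $Osc(m,g)\le C_n|a|^{-m(\xi-1+\frac1{2n})}$ for all large $m$. The finitely many small $m$ have finite oscillation, so $g\in Y_n$. Hence $\bigcap_nY_n=\mathcal{D}_\xi$.

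\textbf{Main obstacle.} The delicate step is the passage from Lemma \ref{lem3.1} to the exact formula for $\overline{\dim}_B$ for all $\delta\to0$, not only along $\delta=|a|^m$. This requires the usual interpolation: $N_\delta$ is monotone in $\delta$, and consecutive scales $|a|^m$ and $|a|^{m+1}$ differ by a fixed factor. When the contraction ratios are unequal, it also requires replacing $\Sigma^m$ by stopping-time words $\omega$ with $|a_\omega|\approx\delta$. Here the OSC ensures that each $\delta$-box meets only boundedly many pieces $L_\omega(\mathfrak{F})\times L_\tau(\mathfrak{F})$. Once this counting is in place, the remaining steps are direct.
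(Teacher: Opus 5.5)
Your skeleton is the paper's. You apply Lemma~\ref{lem3.4} to $Y_n=\mathbb{O}^{\xi+\frac1n}(\mathfrak{F}\times\mathfrak{F})$ and then identify $\bigcap_nY_n$ with $\mathcal{D}_\xi(\mathfrak{F}\times\mathfrak{F})$ via Lemma~\ref{lem3.1}.

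Your monotonicity computation is right, and at that step the paper's proof is wrong. It asserts the sequence is decreasing, but with the printed exponent $1+4\eta-\xi$ it increases in $n$. Switching to the exponent $\xi'-1$ is therefore a necessary correction, not a reading of the definition. Drop the phrase about a normalization built into $4\eta$.

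The printed $\mathbb{O}^{\xi'}$ is the natural space of index $4\eta+2-\xi'$, i.e. $\xi'$ reflected about $2\eta+1$. With that definition the statement is false. Take $\mathfrak{F}=[0,1]$ as in Example~\ref{ex4.1}. For $\xi'\le 3$ one has $Osc(m,g)\le 2\|g\|_\infty 9^m\le 2\|g\|_\infty 3^{m(5-\xi')}$. So $\|g\|_\infty\le\|g\|_{\mathbb{O}^{2+\frac1n}}\le 3\|g\|_\infty$ for every $n$, and $\Theta_2$-Cauchy means uniformly Cauchy. Since $\mathcal{D}_2([0,1]\times[0,1])$ contains all polynomials but not every continuous function, it is not $\Theta_2$-complete.

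With the exponent $\xi'-1$, your two inclusions are correct, and so is your formula $\overline{\dim}_B\,\mathfrak{Gr}(g)=\max\{2\eta,\,1+\limsup_m\frac{\log Osc(m,g)}{-m\log|a|}\}$. They are also cleaner than the paper's exponent bookkeeping. For instance, its bound $\sup_m(|a|^m)^{4\eta}$ does not follow from its own definition.

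The weak point is your main-obstacle paragraph, which credits the OSC with the needed counting. The OSC gives bounded multiplicity. But the lower half of Lemma~\ref{lem3.1}, and hence your formula, also needs each piece $L_\omega(\mathfrak{F})\times L_\tau(\mathfrak{F})$ to be connected. Then $g$ takes every value of an interval of length $\mathcal{R}_g$ on it, which forces $\gtrsim\mathcal{R}_g/\delta$ cubes.

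Here is a single-scale counterexample on the middle-third Cantor set $C$. Let $g(\mathbf{x},\mathbf{y})=R$ if the $(m+1)$-st ternary digit of $\mathbf{x}$ is $2$, and $g=0$ otherwise. Then $|a|^{-m}Osc(m,g)=12^mR$, while $N_{3^{-m}}(\mathfrak{Gr}(g))\lesssim 4^m$.

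A multiscale version breaks your formula for a fixed function. Take $g(\mathbf{x},\mathbf{y})=h(\mathbf{x})$ with $h(\sum_i x_i3^{-i})=\sum_i\frac{x_i}{2}(0.4)^i$, where $x_i\in\{0,2\}$. It has $\dim_B\mathfrak{Gr}(g)=\frac{\log2}{\log2.5}+\frac{\log2}{\log3}\approx1.387$. Your formula gives $1+\frac{\log1.6}{\log3}\approx1.428$, so $\bigcap_nY_n\subsetneq\mathcal{D}_{1.4}(C\times C)$.

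Unequal ratios break the $\Sigma^m$-based identification outright. Take $L_1(x)=\frac x4$, $L_2(x)=\frac{3x}{4}+\frac14$ and $g(\mathbf{x},\mathbf{y})=\mathbf{x}$ on $[0,1]^2$. Then $Osc(m,g)=2^m$ grows faster than $(\frac43)^{m(1+\frac1n)}$ for every $n$, although $g\in\mathcal{D}_2$. So your stopping-time remedy redefines $Osc$, and with it the spaces $Y_n$; it is not merely a refinement of the count.

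The argument, yours and the paper's alike, is complete for equal contraction ratios and connected $\mathfrak{F}$. That covers $[0,1]$ and the Sierpi\'nski gasket.
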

\begin{proof}
    It is clear from the definition of $\|.\|_{{\mathbb{O}^{\xi}}}$ that $\left(\mathbb{O}_{\xi+\frac{1}{n}}(\mathfrak{F}\times \mathfrak{F}),\|.\|_{\mathbb{O}^{\xi+\frac{1}{n}}}\right)$ is a decreasing sequence of a complete vector space. Then, by Lemma \ref{lem3.4}, $\left(\bigcap_{n\in \mathbb{N}}\mathbb{O}^{\xi+\frac{1}{n}}(\mathfrak{F}\times \mathfrak{F}),\Theta_\xi\right)$ is a complete metric space. Now, we just need to show that $\mathcal{D}_\xi(\mathfrak{F}\times \mathfrak{F})=\bigcap_{n\in \mathbb{N}}\mathbb{O}^{\xi+\frac{1}{n}}(\mathfrak{F}\times \mathfrak{F})$. For this let $g\in \mathcal{D}_\xi(\mathfrak{F}\times \mathfrak{F})$, thus 
    $\overline{\dim}_B\big(\mathfrak{Gr}(g)\big) \le \xi$, subsequently by \eqref{eqn21}, for each $n\in \mathbb{N}$, there exists $\delta_g$ with 
    $$\frac{1}{|a|^m}\sum_{\omega,\tau\in \Sigma^m}\mathcal{R}_g[L_\omega(\mathfrak{F})\times L_\tau (\mathfrak{F})]\le N_\delta\big(Gr(g)\big)\le \left(\frac{1}{|a|^m}\right)^{\xi+\frac{1}{n}}.$$
    Therefore, 
    \begin{eqnarray*}
        \sup_{m\in\mathbb{N}}\frac{Osc(m,g)}{|a|^{-m\left(1+4\eta-\xi-\frac{1}{n}\right)}}&\le& \sup_{m\in\mathbb{N}}\left(|a|^m\right)^{4\eta}\\
        &<&\infty.
    \end{eqnarray*}
    Hence, $g\in \bigcap_{n\in \mathbb{N}}\mathbb{O}^{\xi+\frac{1}{n}}(\mathfrak{F}\times \mathfrak{F})$. Next, for the other side of inclusion, let $g\in \bigcap_{n\in \mathbb{N}}\mathbb{O}^{\xi+\frac{1}{n}}(\mathfrak{F}\times \mathfrak{F})$. Then, for each $n\in\mathbb{N}$, we have $\sup_{m\in\mathbb{N}}\frac{Osc(m,g)}{|a|^{-m\left(1+4\eta-\xi-\frac{1}{n}\right)}}<\infty$.
    Thus, 
    \begin{eqnarray*}
        \frac{1}{|a|^{m}}\sum_{\omega,\tau\in \Sigma^m}\mathcal{R}_g[L_\omega(\mathfrak{F})\times L_\tau (\mathfrak{F})]\le N_\delta\big(Gr(g)\big)\le \frac{K_g}{|a|^{-4m\eta}} \left(\frac{1}{|a|^{m}}\right)^{\xi+\frac{1}{n}},
    \end{eqnarray*}
    for some constant $K_g>0$. Now, on combining the above inequality with the definition of the upper box dimension, we get $\overline{\dim}_B\big(\mathfrak{Gr}(g)\big) \le \xi+\frac{1}{n}$ for all $n$. Hence, $\overline{\dim}_B\big(\mathfrak{Gr}(g)\big) \le \xi$, and subsequently $g\in \mathcal{D}_\xi(\mathfrak{F\times F})$.
\end{proof}
The continuity of the vector space operations w.r.t. the topology induced by $\Theta_\xi$ allows to say that $(\mathcal{D}_\xi(\mathfrak{F}\times \mathfrak{F}), \Theta_\xi)$ is a completely metrizable topological vector space. Therefore, now we can scrutinize the prevalent subsets of $\mathcal{D}_\xi(\mathfrak{F}\times \mathfrak{F})$.

\begin{lemma}\label{lem23}
\begin{enumerate}
    \item \label{lem23_it1}If $\mathcal{O}$ is an open set in $\Theta_\infty$, then $\mathcal{O}$ is also open in $\Theta_\xi$, where $\Theta_\infty(f,g)=\|f-g\|_\infty$.
    \item \label{lem23_it2}For $h,g\in \mathcal{C}(\mathfrak{F}\times \mathfrak{F})$, $h\in \mathcal{U}_{\Theta_\infty}(g,r)$ implies that $H(h)\in \mathcal{U}_{\Theta_\infty}(H(g),r)$, where $\mathcal{U}_{\Theta_\infty}(\mathbf{a},r)$ denotes the open ball centered at $\mathbf{a}$ with radius $r>0$ in $\big(\mathcal{C}(\mathfrak{F} \times \mathfrak{F}),\Theta_\infty\big).$
\end{enumerate}
\end{lemma}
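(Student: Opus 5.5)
For Item \ref{lem23_it1}, I will compare the two metrics directly. The aim is to show that $\Theta_\infty$ is dominated by $\Theta_\xi$ on $\mathcal{D}_\xi(\mathfrak{F}\times\mathfrak{F})$, so that the $\Theta_\xi$-topology is finer than the topology that $\Theta_\infty$ induces on this subspace.

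Fix $g,h\in\mathcal{D}_\xi(\mathfrak{F}\times\mathfrak{F})$. By definition, for every $n$,
$$\|g-h\|_{\mathbb{O}^{\xi+\frac1n}}\ge \|g-h\|_\infty .$$
Keeping only the $n=1$ term of the series gives
$$\Theta_\xi(g,h)\ge \min\{2^{-1},\|g-h\|_{\mathbb{O}^{\xi+1}}\}\ge \min\{2^{-1},\|g-h\|_\infty\}.$$
So whenever $\Theta_\xi(g,h)<r\le \tfrac12$, the minimum on the right cannot equal $\tfrac12$. It follows that $\|g-h\|_\infty\le\Theta_\xi(g,h)<r$, and hence
$$\mathcal{U}_{\Theta_\xi}(g,r)\subseteq \mathcal{U}_{\Theta_\infty}(g,r)\quad\text{for } r\le \tfrac12 .$$

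Now let $\mathcal{O}$ be $\Theta_\infty$-open and $g\in\mathcal{O}$ (read in $\mathcal{D}_\xi$, i.e. $\mathcal{O}\cap\mathcal{D}_\xi$). Choose $r$ with $\mathcal{U}_{\Theta_\infty}(g,r)\subseteq\mathcal{O}$, and shrink it so that $r\le\tfrac12$. The inclusion above then puts the $\Theta_\xi$-ball of radius $r$ about $g$ inside $\mathcal{O}$. Therefore $\mathcal{O}$ is $\Theta_\xi$-open.

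For Item \ref{lem23_it2}, I will prove the $1$-Lipschitz estimate
$$\|H(h)-H(g)\|_\infty\le\|h-g\|_\infty.$$
Fix $\mathbf{x}\in\mathfrak{F}$. For every $\mathbf{y}\in\mathfrak{F}$,
$$h(\mathbf{x},\mathbf{y})\le g(\mathbf{x},\mathbf{y})+\|h-g\|_\infty\le H(g)(\mathbf{x})+\|h-g\|_\infty .$$
Taking the supremum over $\mathbf{y}$ gives $H(h)(\mathbf{x})\le H(g)(\mathbf{x})+\|h-g\|_\infty$. Exchanging the roles of $g$ and $h$ gives the reverse inequality.

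The only point needing care is strictness. Since $\mathfrak{F}$ is compact and $g,h$ are continuous, $\|h-g\|_\infty$ is attained and is $<r$. The bound $|H(h)(\mathbf{x})-H(g)(\mathbf{x})|\le\|h-g\|_\infty$ holds uniformly in $\mathbf{x}$, so
$$\|H(h)-H(g)\|_\infty\le \|h-g\|_\infty<r,$$
which is the claim $H(h)\in\mathcal{U}_{\Theta_\infty}(H(g),r)$.

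Neither item has a genuine obstacle. The only subtlety in Item \ref{lem23_it1} is the truncation by $2^{-n}$ in $\Theta_\xi$, which the restriction $r\le\tfrac12$ handles.
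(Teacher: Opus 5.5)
Your proposal is correct and matches the paper's proof in substance. For Item 1, the paper shows that the complement of $\mathcal{O}$ is sequentially closed because $\Theta_\xi$-convergence forces $\Theta_\infty$-convergence; that rests on the same estimate $\|g-h\|_\infty\le\Theta_\xi(g,h)$ (valid once $\Theta_\xi(g,h)<\tfrac12$), which you derive explicitly and apply through ball inclusion rather than through sequences. For Item 2, the paper uses the same pointwise bound $\sup_{\mathbf{y}}h-\sup_{\mathbf{y}}g\le\sup_{\mathbf{y}}(h-g)$ in both directions. Your remark about compactness and attainment of $\|h-g\|_\infty$ is unnecessary, since $\|h-g\|_\infty<r$ is simply the hypothesis $h\in\mathcal{U}_{\Theta_\infty}(g,r)$.
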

\begin{proof}
    \begin{enumerate}
        \item It is sufficient to show that 
              $\mathcal{O}^c$ is closed, i.e., for a sequence $f_n\in \mathcal{O}^c$ such that $f_n\to f$ w.r.t. $\Theta_\xi$, then $f\in \mathcal{O}^c$.
              By the definition of $\Theta_\xi$, we have
              \begin{eqnarray*}
                  \Theta_\xi(f_n,f)&\to& 0 \quad \text{ as } n\to \infty\\
                  \Theta_\infty(f_n,f)&\to& 0 \quad \text{ as } n\to \infty.
              \end{eqnarray*}
             Thus, $f\in \mathcal{O}^c$ in $\Theta_\infty$. Hence, $f\in \mathcal{O}^c$ in $\Theta_\xi$, completes the assertion.
             \item For a fixed $\mathbf{x}\in \mathfrak{F}$, using the inequality $\sup f -\sup g \le \sup(f-g)$ we get
             \begin{eqnarray*}
                 H(h)(\mathbf{x})-H(g)(\mathbf{x}) &=& \sup_{\mathbf{y}\in \mathfrak{F}}h(\mathbf{x},\mathbf{y})-\sup_{\mathbf{y}\in \mathfrak{F}}g(\mathbf{x},\mathbf{y})\\
                 &\le& \sup_{\mathbf{y}\in \mathfrak{F}}(h(\mathbf{x},\mathbf{y})-g(\mathbf{x},\mathbf{y}))\\&\le& \sup_{\mathbf{x}, \mathbf{y}\in \mathfrak{F}}|h(\mathbf{x},\mathbf{y})-g(\mathbf{x},\mathbf{y})|\\
                 &\le& \|h(\mathbf{x},\mathbf{y})-g(\mathbf{x},\mathbf{y})\|_\infty\\
                 &<& r,
             \end{eqnarray*}
             and \begin{eqnarray*}
                 H(g)(\mathbf{x})-H(h)(\mathbf{x}) &=& \sup_{\mathbf{y}\in \mathfrak{F}}g(\mathbf{x},\mathbf{y})-\sup_{\mathbf{y}\in \mathfrak{F}}g(\mathbf{x},\mathbf{y})\\
                 &\le& \sup_{\mathbf{y}\in \mathfrak{F}}(g(\mathbf{x},\mathbf{y})-h(\mathbf{x},\mathbf{y}))\\&\le& \sup_{\mathbf{x}, \mathbf{y}\in \mathfrak{F}}|h(\mathbf{x},\mathbf{y})-g(\mathbf{x},\mathbf{y})|\\
                 &\le& \|h(\mathbf{x},\mathbf{y})-g(\mathbf{x},\mathbf{y})\|_\infty\\
                 &<& r,
             \end{eqnarray*}
             Since the last inequality true for $\mathbf{x}\in \mathfrak{F}$, yields the assertion.
     \end{enumerate}
\end{proof}
\begin{proposition}\label{prop26}
     Under the above defined $\mathfrak{A}_\xi(\mathfrak{F}\times \mathfrak{F})$ and $\mathcal{D}_\xi(\mathfrak{F}\times \mathfrak{F})$, we have
    \begin{enumerate}
        \item \label{prop26_it1}For each $\xi\in [2\eta,2\eta+1)$, the set $\mathfrak{A}_\xi(\mathfrak{F}\times \mathfrak{F})$ is a Borel subset of $(\mathcal{D}_\xi(\mathfrak{F}\times \mathfrak{F}), \Theta_\xi)$;
        \item \label{prop26_it2}$\mathfrak{A}_{2\eta+1}(\mathfrak{F}\times \mathfrak{F})$ is a Borel subset of $(\mathcal{C}(\mathfrak{F}\times \mathfrak{F}), \Theta_\infty)$.
    \end{enumerate}
\end{proposition}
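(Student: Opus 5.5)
The plan is to follow the Falconer--Fraser scheme and express $\mathfrak{A}_\xi(\mathfrak{F}\times\mathfrak{F})$ through countable unions and intersections of sets that are open or closed in $\Theta_\infty$. Lemma \ref{lem23}\eqref{lem23_it1} then transfers Borel measurability from $\Theta_\infty$ to $\Theta_\xi$ on $\mathcal{D}_\xi(\mathfrak{F}\times\mathfrak{F})$. More precisely, a $\Theta_\infty$-open set intersected with $\mathcal{D}_\xi$ is $\Theta_\xi$-open, so every $\Theta_\infty$-Borel set restricts to a $\Theta_\xi$-Borel set. It therefore suffices to treat both items at once by showing that the conditions defining $\mathfrak{A}_\xi$ are Borel with respect to the sup-norm topology on $\mathcal{C}(\mathfrak{F}\times\mathfrak{F})$.

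First we discretise the box-counting conditions. Taking $\delta=|a|^m$, Lemma \ref{lem3.1} shows that $N_\delta(\mathfrak{Gr}(g))$ is comparable to $|a|^{-m}Osc(m,g)$ up to the additive term $2N^{2m}$. Hence the lower and upper box dimensions can be read off along the sequence $\delta_m=|a|^m$, because consecutive scales differ by a bounded factor. The condition $\underline{\dim}_B\mathfrak{Gr}(g)\ge \xi$ becomes
\[
\bigcap_{k}\bigcup_{M}\bigcap_{m\ge M}\big\{g:\ \log\big(2N^{2m}+|a|^{-m}Osc(m,g)\big)\ge (\xi-\tfrac1k)(-m\log|a|)\big\},
\]
and $\overline{\dim}_B\le\xi$ is written analogously with $\limsup$. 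By the preceding lemma, $g\mapsto Osc(m,g)$ is continuous in $\Theta_\infty$: it is Lipschitz with constant $2N^{2m}$. Each inner set is therefore closed or open, and so $\mathcal{B}_\xi$ is Borel. If one prefers to avoid the additive constant, the same can be done directly with $N_\delta$ restricted to $\delta$-mesh cubes. In that case one uses that $\{g: N_\delta(\mathfrak{Gr}(g))\le c\}$ is closed under uniform limits for the count of closed mesh cubes meeting the graph. I expect to use the oscillation form, since its continuity is already established in the paper.

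Next come the horizon conditions. By Lemma \ref{lem23}\eqref{lem23_it2}, $g\mapsto H(g)$ is $1$-Lipschitz from $(\mathcal{C}(\mathfrak{F}\times\mathfrak{F}),\Theta_\infty)$ to $(\mathcal{C}(\mathfrak{F}),\|\cdot\|_\infty)$. The one-variable analogue of the above (Note \ref{note25}) gives that $\{f\in\mathcal{C}(\mathfrak{F}):\ \underline{\dim}_B\mathfrak{Gr}(f)\ge \xi-\eta\}$ and $\{f:\ \overline{\dim}_B\mathfrak{Gr}(f)\le 1+\eta\}$ are Borel. This uses the single-fractal oscillation $\sum_{\omega\in\Sigma^m}\mathcal{R}_f[L_\omega(\mathfrak{F})]$, which again depends continuously on $f$. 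Preimages of Borel sets under the continuous map $H$ are Borel. So $\mathfrak{A}_\xi$ is the intersection of $\mathcal{B}_\xi$ with two Borel preimages, and is Borel in $\Theta_\infty$. For item \eqref{prop26_it1} we restrict to $\mathcal{D}_\xi$ and use Lemma \ref{lem23}\eqref{lem23_it1}. Item \eqref{prop26_it2} is immediate, since the upper bound $1+\eta$ on $\overline{\dim}_B\mathfrak{Gr}(H(g))$ is automatic and the space is already $\Theta_\infty$.

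The main obstacle is the justification that the box dimensions can be computed from $Osc(m,\cdot)$ at the geometric scales $|a|^m$. This requires care when the contraction ratios $a^{(i)}$ differ, because cylinders $L_\omega(\mathfrak{F})$ with $|\omega|=m$ then have varying sizes. To handle it I would first try to replace words of fixed length by a Moran cut-set $\{\omega:\ |a_\omega|\approx |a|^m\}$, using the OSC to bound the overlaps. Failing that, I would take Lemma \ref{lem3.1} as given, since the paper already treats it as the comparison. Either way, the Borel argument only needs that the relevant counting quantity is continuous (or semicontinuous) in $g$ and that the scales form a sequence with bounded ratios.
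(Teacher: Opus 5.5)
Your proposal is correct and follows essentially the paper's route. You express $\mathcal{B}_\xi$ and the two horizon conditions as countable $\bigcap\bigcup\bigcap$ combinations of level sets of the $\Theta_\infty$-continuous maps $g\mapsto Osc(m,g)$, composed with the $1$-Lipschitz map $H$ (Lemma~\ref{lem23}\eqref{lem23_it2}), and then pass to $\Theta_\xi$ via Lemma~\ref{lem23}\eqref{lem23_it1}; your use of the one-variable oscillation on $\mathfrak{F}$ for $H(g)$ and your warning about unequal ratios $a^{(i)}$ are in fact more careful than the paper's write-up.

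One slip in your aside: for closed mesh cubes, $g\mapsto N_\delta(\mathfrak{Gr}(g))$ is upper (not lower) semicontinuous under uniform convergence, so $\{g: N_\delta(\mathfrak{Gr}(g))\le c\}$ is open rather than closed. Either semicontinuity suffices for Borel measurability, and this direct route actually makes the argument independent of Lemma~\ref{lem3.1} altogether.
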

\begin{proof}
    Let us first start with the proof of Item (\ref{prop26_it1}).
    \begin{enumerate}
        \item For $\xi\in [2\eta,2\eta+1)$, we get
        \begin{eqnarray*}
            \mathfrak{A}_\xi(\mathfrak{F}\times \mathfrak{F})=\mathcal{B}_\xi(\mathfrak{F}\times \mathfrak{F})\cap \mathfrak{A}_2,
        \end{eqnarray*} 
        where $\mathcal{B}_\xi(\mathfrak{F}\times \mathfrak{F})$ is same as defined above and
        $$\mathfrak{A}_2=\bigg\{g\in \mathcal{C}(\mathfrak{F}\times \mathfrak{F}):~\xi-\eta\le\underline{\dim}_B\big(\mathfrak{Gr}(H(g))\big)\le \overline{\dim}_B\big(\mathfrak{Gr}(H(g))\big)\le 1+\eta\bigg\}.$$
        
        Using \eqref{eqn21}, we may write $$\mathcal{B}_\xi(\mathfrak{F}\times \mathfrak{F})=\bigcap_{r\in\mathbb{Q}^+}\bigcup_{m\in \mathbb{N}}\bigcap_{n\ge m}\mathfrak{A}_1^m\quad\text{and}\quad \mathfrak{A}_2=\bigcap_{r\in\mathbb{Q}^+}\bigcup_{m\in \mathbb{N}}\bigcap_{n\ge m}\mathfrak{A}_\frac{1}{N^m},$$
        where $\mathbb{Q}^+$ represents the collection of all positive rational numbers and 
        $$\mathfrak{A}_1^m=\bigg\{g\in \mathcal{C}(\mathfrak{F}\times \mathfrak{F}):~2^{(\xi-r-1)m}<\sum_{\omega,\tau\in \Sigma^m}\mathcal{R}_g[L_\omega(\mathfrak{F})\times L_\tau (\mathfrak{F})]<2^{(\xi+r-1)m}\bigg\},$$
        and
        $$\mathfrak{A}_2^m=\bigg\{g\in \mathcal{C}(\mathfrak{F}\times \mathfrak{F}):~2^{\big(\xi-\eta-r-1\big)m}<\sum_{\omega,\tau\in \Sigma^m}\mathcal{R}_{H(g)}[L_\omega(\mathfrak{F})\times L_\tau (\mathfrak{F})]<2^{\big(1+\eta+r-1\big)m}\bigg\},$$
        It is clear that for each $r,m,n$, the set $\mathfrak{A}_1^m$ is open w.r.t. $\Theta_\infty$, and using Item (\ref{lem23_it2}) of Lemma \ref{lem23}, the set $\mathfrak{A}_2^m$ is also open w.r.t. $\Theta_\infty$. From Item (\ref{lem23_it1}) of Lemma \ref{lem23}, $\mathcal{B}_\xi(\mathfrak{F}\times \mathfrak{F}),\mathfrak{A}_1$ are also open sets w.r.t. $\Theta_\xi$. Hence $\mathfrak{A}_\xi(\mathfrak{F}\times \mathfrak{F})$ is a Borel subset of $\big(\mathcal{D}_\xi(\mathfrak{F}\times \mathfrak{F}),\Theta_\xi\big)$. 
        \item One can prove this part following the similar lines of Item (\ref{prop26_it1}) of this proposition and \cite[Lemma 4.1]{FF}.
    \end{enumerate}
\end{proof}

\begin{note}
    Here, we define a map to construct the probe subspace. For $\xi\in [2\eta,2\eta+1]$, choose a non-constant function $\theta_\xi\in \mathcal{B}_{\xi-\eta}(\mathfrak{F})$. Then the function $\Phi_\xi: \mathfrak{F} \times \mathfrak{F} \to\mathbb{R}$, defined by
    $$\Phi_\xi(\mathbf{x},\mathbf{y})=\theta_\xi(\mathbf{x}),$$
    has box dimension $\xi$, i.e., $\dim_B\big(\mathfrak{Gr}(\Phi_\xi)\big)=\xi.$ It is simple to deduce that $H(\Phi_\xi)=\theta_\xi$ and $\Phi_\xi\in \mathcal{D}_\xi(\mathfrak{F}\times \mathfrak{F})$.
\end{note}

\begin{proposition}\label{prop28}
    Let $\xi\in [2\eta,2\eta+1]$ and let $g\in \mathcal{D}_\xi(\mathfrak{F}\times \mathfrak{F})$. For $\mathscr{L}^1$-almost all $k\in \mathbb{R}$, we get
    \begin{enumerate}
        \item $\dim_B\big(\mathfrak{Gr}(g+k\Phi_\xi)\big)=\dim_B\big(\mathfrak{Gr}(\Phi_\xi)\big)$.
        \item $\xi-\eta\le \underline{\dim}_B\big(\mathfrak{Gr}(H(g+k\Phi_\xi))\big)\le \overline{\dim}_B\big(\mathfrak{Gr}(H(g+k\Phi_\xi))\big)\le 1+\eta$.
    \end{enumerate}
\end{proposition}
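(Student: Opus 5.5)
We follow the scheme of \cite[Lemma 2.3, Lemma 2.4 and Section 4]{FF}, using the structural facts about $\overline{\dim}_B$ and $\underline{\dim}_B$ of sums already proved here: Proposition \ref{prop23}, Proposition \ref{prop24} and Note \ref{note25}. The argument has two independent parts, one for the surface and one for its horizon.

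\emph{Part (1).} Since $g\in\mathcal{D}_\xi(\mathfrak{F}\times\mathfrak{F})$, we have $\overline{\dim}_B(\mathfrak{Gr}(g))\le\xi=\dim_B(\mathfrak{Gr}(\Phi_\xi))$.
\begin{enumerate}
\item Apply item (1) of Proposition \ref{prop24} to the pair $g,\Phi_\xi$. For all $k$ outside at most two exceptional values,
$$\overline{\dim}_B\big(\mathfrak{Gr}(g+k\Phi_\xi)\big)=\max\{\overline{\dim}_B(\mathfrak{Gr}(g)),\xi\}=\xi.$$
\item Item (2) of the same proposition (taken in the form $g+kh$) gives, for $\mathscr{L}^1$-a.e.\ $k$,
$$\underline{\dim}_B\big(\mathfrak{Gr}(g+k\Phi_\xi)\big)\ge\underline{\dim}_B(\mathfrak{Gr}(\Phi_\xi))=\xi.$$
\item Hence, for a.e.\ $k$, the box dimension of $\mathfrak{Gr}(g+k\Phi_\xi)$ exists and equals $\xi=\dim_B(\mathfrak{Gr}(\Phi_\xi))$.
\end{enumerate}
This part is essentially bookkeeping.

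\emph{Part (2), upper bound.} This is automatic. $H(g+k\Phi_\xi)$ is a continuous function on $\mathfrak{F}$, so its graph lies in $\mathfrak{F}\times\mathbb{R}$. Covering $\mathfrak{F}$ by the $\asymp|a|^{-m\eta}$ cylinders $L_\omega(\mathfrak{F})$ and stacking at most $O(|a|^{-m})$ boxes over each (the function is bounded) gives the bound $\overline{\dim}_B\le\eta+1$. Equivalently, use the one-variable analogue of Lemma \ref{lem3.1} via Note \ref{note25}, together with $\mathcal{R}_{h}[L_\omega(\mathfrak{F})]\le 2\|h\|_\infty$.

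\emph{Part (2), lower bound.} The key identity is $\Phi_\xi(\mathbf{x},\mathbf{y})=\theta_\xi(\mathbf{x})$, which gives
$$H(g+k\Phi_\xi)(\mathbf{x})=\sup_{\mathbf{y}}\big(g(\mathbf{x},\mathbf{y})+k\theta_\xi(\mathbf{x})\big)=H(g)(\mathbf{x})+k\,\theta_\xi(\mathbf{x}).$$
So the horizon of the perturbed surface is the one-variable perturbation $H(g)+k\theta_\xi$ on $\mathfrak{F}$. Apply the $\mathfrak{F}$-version of Proposition \ref{prop24}(2) (Note \ref{note25}) to the pair $H(g),\theta_\xi\in\mathcal{C}(\mathfrak{F})$. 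For a.e.\ $k$,
$$\underline{\dim}_B\big(\mathfrak{Gr}(H(g)+k\theta_\xi)\big)\ge\underline{\dim}_B(\mathfrak{Gr}(\theta_\xi))=\xi-\eta,$$
because $\theta_\xi\in\mathcal{B}_{\xi-\eta}(\mathfrak{F})$. Intersecting the full-measure sets of $k$ from parts (1) and (2) (a countable intersection of conull sets) finishes the proof.

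\emph{Main obstacle.} The delicate step is the almost-everywhere lower bound for $\underline{\dim}_B$ of $g+kh$ on $\mathfrak{F}$ (and on $\mathfrak{F}\times\mathfrak{F}$). Following \cite[Lemma 2.4]{FF}, one works with the oscillation sums $Osc(m,\cdot)$, which are comparable to $N_\delta$ at $\delta=|a|^m$ by Lemma \ref{lem3.1}; for each fixed $m$, the oscillation over each cylinder pair $L_\omega(\mathfrak{F})\times L_\tau(\mathfrak{F})$ is sub-additive. The estimate to aim for is:
\begin{enumerate}
\item for $k$ in a bounded interval, the Lebesgue measure of the set of $k$ with $Osc(m,g+kh)\le|a|^{-m(s-1-\epsilon)}$ is at most $C\,|a|^{m\epsilon/2}$, whenever $Osc(m,h)\ge|a|^{-m(s-1-\epsilon/2)}$ with $s=\underline{\dim}_B(\mathfrak{Gr}(h))$;
\item summing over $m$ and applying Borel--Cantelli along the geometric scales $|a|^m$ then yields the bound.
\end{enumerate}
The measure bound in step 1 comes from the lower bound $\mathcal{R}_{g+kh}\ge|k|\,\mathcal{R}_h-\mathcal{R}_g$ together with a reverse triangle-type argument on each cylinder. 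Since the non-uniform contraction ratios only change constants in the comparison with $N_\delta$, I expect the argument of \cite{FF} to transfer with $N^m$ cylinders replacing dyadic squares, but checking this comparability carefully is the main work.
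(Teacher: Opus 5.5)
Your proof is correct and follows the paper's route. Part (1) is read off from both items of Proposition \ref{prop24} using $\overline{\dim}_B(\mathfrak{Gr}(g))\le\xi=\dim_B(\mathfrak{Gr}(\Phi_\xi))$, and part (2) comes from the identity $H(g+k\Phi_\xi)=H(g)+k\theta_\xi$ together with the $\mathfrak{F}$-version of Proposition \ref{prop24} (Note \ref{note25}). Your explicit covering argument for the bound $1+\eta$ is fine provided you cover $\mathfrak{F}$ directly by $N_\delta(\mathfrak{F})\lesssim\delta^{-\eta}$ boxes: the $N^m$ level-$m$ cylinders number $\asymp|a|^{-m\eta}$ only when all contraction ratios coincide, so the Lemma 3.1 variant you offer as equivalent is weaker in general. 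Your closing sketch re-proves Proposition \ref{prop24}(2), which the paper takes from \cite{FF}, so it is not needed for this statement.
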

\begin{proof}
    \begin{enumerate}
        \item From Proposition \ref{prop24}, one may deduce the assertion of this part easily.
        \item Independence of $\Phi_\xi(\mathbf{x},\mathbf{y})$ w.r.t. $\mathbf{y}$ yields
        \begin{eqnarray*}
            H(g+k\Phi_\xi)(\mathbf{x})&=& \sup_{\mathbf{y}\in \mathfrak{F}} (g+k\Phi)(\mathbf{x},\mathbf{y})\\
            &=& \sup_{\mathbf{y}\in \mathfrak{F}} \big(g(\mathbf{x},\mathbf{y})+k\Phi(\mathbf{x},\mathbf{y})\big)\\
            &=& \sup_{\mathbf{y}\in \mathfrak{F}} \big(g(\mathbf{x},\mathbf{y})\big)+kH(\Phi)(\mathbf{x})\\
            &=& H(g)(\mathbf{x})+kH(\Phi)(\mathbf{x}).
        \end{eqnarray*}
        Now, in the reference to Proposition \ref{prop24} and Note \ref{note25} for $H(g),H(\Phi)\in \mathcal{C(\mathfrak{F})}$ gives the required inequality.
    \end{enumerate}
\end{proof}
For $\xi\in [2\eta,2\eta+1]$, consider $\Lambda_\xi=\{k\Phi_\xi:~k\in\mathbb{R}\}\subseteq \mathcal{C}(\mathfrak{F}\times \mathfrak{F})$. Define a map $\Omega_\xi:\Lambda_\xi\to \mathbb{R}$ defined as:
$$\Omega_\xi(k\Phi_\xi)=k,$$
and a measure $\mathscr{L}_{\Lambda_\xi}$ on $\Lambda_\xi$ by
$$\mathscr{L}_{\Lambda_\xi}=\mathscr{L}^1\circ \Omega_\xi$$
\begin{proposition}\label{prop29}
    $\Lambda_\xi$ is a probe space for $\mathfrak{A}_\xi(\mathfrak{F}\times \mathfrak{F})$. That is,
    $$\mathscr{L}_{\Lambda_\xi}\bigg(\mathcal{D}_\xi(\mathfrak{F}\times \mathfrak{F})\backslash\big(g+\mathfrak{A}_\xi(\mathfrak{F}\times \mathfrak{F})\big)\bigg)=0,$$
    for all $g\in \mathcal{D}_\xi(\mathfrak{F}\times \mathfrak{F})$.
\end{proposition}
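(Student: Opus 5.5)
The plan is to translate the probe condition into a statement about the real parameter $k$, and then read off the conclusion from Proposition \ref{prop28}. By the definition of $\mathscr{L}_{\Lambda_\xi}=\mathscr{L}^1\circ\Omega_\xi$, the measure only sees the line $\Lambda_\xi$. Fix $g\in\mathcal{D}_\xi(\mathfrak{F}\times\mathfrak{F})$. A point $k\Phi_\xi\in\Lambda_\xi$ fails to lie in $g+\mathfrak{A}_\xi(\mathfrak{F}\times\mathfrak{F})$ exactly when $k\Phi_\xi-g\notin\mathfrak{A}_\xi(\mathfrak{F}\times\mathfrak{F})$. Hence it suffices to show
$$\mathscr{L}^1\big(\{k\in\mathbb{R}:~-g+k\Phi_\xi\notin\mathfrak{A}_\xi(\mathfrak{F}\times\mathfrak{F})\}\big)=0.$$
Since $\mathcal{D}_\xi(\mathfrak{F}\times\mathfrak{F})$ is a vector space (Proposition \ref{prop23}), $-g\in\mathcal{D}_\xi(\mathfrak{F}\times\mathfrak{F})$. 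So it is enough to prove that for every $f\in\mathcal{D}_\xi(\mathfrak{F}\times\mathfrak{F})$, the function $f+k\Phi_\xi$ lies in $\mathfrak{A}_\xi(\mathfrak{F}\times\mathfrak{F})$ for $\mathscr{L}^1$-almost all $k$. Measurability of this exceptional set is not an issue: it is the preimage of a Borel set (Proposition \ref{prop26}) under the continuous map $k\mapsto f+k\Phi_\xi$, and in any case we only need it to be contained in a null set.

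Next I check the two defining conditions of $\mathfrak{A}_\xi$. For the graph condition, I apply Proposition \ref{prop28}(1) to $f$: for a.e.\ $k$, $\dim_B(\mathfrak{Gr}(f+k\Phi_\xi))=\dim_B(\mathfrak{Gr}(\Phi_\xi))=\xi$, so $f+k\Phi_\xi\in\mathcal{B}_\xi$. For the horizon condition, I apply Proposition \ref{prop28}(2) to get $\xi-\eta\le\underline{\dim}_B\le\overline{\dim}_B\le1+\eta$ for $\mathfrak{Gr}(H(f+k\Phi_\xi))$ outside another null set. The union of the two exceptional null sets is null, and this proves the displayed identity. I also note that membership in $\mathcal{D}_\xi$ is automatic, because $f$ and $\Phi_\xi$ both lie in that vector space.

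The main obstacle is justifying Proposition \ref{prop28}(1), in particular that the lower box dimension reaches $\xi$ and does not merely stay below it. Via Proposition \ref{prop24}, for a.e.\ $k$ we get $\overline{\dim}_B=\max\{\overline{\dim}_B\mathfrak{Gr}(f),\xi\}=\xi$, using $f\in\mathcal{D}_\xi$. We also get $\underline{\dim}_B\ge\underline{\dim}_B\mathfrak{Gr}(\Phi_\xi)=\xi$, using $\Phi_\xi\in\mathcal{B}_\xi$. The same reasoning applied to $H(f+k\Phi_\xi)=H(f)+k\theta_\xi$ gives the horizon lower bound $\xi-\eta$, since $\theta_\xi\in\mathcal{B}_{\xi-\eta}(\mathfrak{F})$. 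For the upper bound $1+\eta$, I use the general fact that the graph of a continuous function on $\mathfrak{F}$ has upper box dimension at most $\eta+1$, which follows from Lemma \ref{lem3.1} adapted to $\mathfrak{F}$ together with the boundedness of oscillations.
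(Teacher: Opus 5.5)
Your proposal is correct and follows the paper's route: restrict $\mathscr{L}_{\Lambda_\xi}$ to the line $\Lambda_\xi\subseteq\mathcal{D}_\xi(\mathfrak{F}\times\mathfrak{F})$, rewrite the exceptional set as $\{k:\ k\Phi_\xi-g\notin\mathfrak{A}_\xi(\mathfrak{F}\times\mathfrak{F})\}$, and conclude from Proposition~\ref{prop28} applied to $-g\in\mathcal{D}_\xi(\mathfrak{F}\times\mathfrak{F})$, a step the paper leaves implicit. One small fix concerns your bound $\overline{\dim}_B\mathfrak{Gr}(h)\le 1+\eta$: justify it directly from $\mathfrak{Gr}(h)\subseteq\mathfrak{F}\times[-\|h\|_\infty,\|h\|_\infty]$ and $\overline{\dim}_B(E\times[c,d])\le\overline{\dim}_B E+1$, because the Lemma~\ref{lem3.1} route (with $\delta=|a|^m$ and $N^m$ pieces) only gives $1+\log N/\log(1/|a|)$, which exceeds $1+\eta$ when the contraction ratios are unequal.
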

\begin{proof}
    Let $g\in \mathcal{C(SG\times SG)}$. Since $\mathscr{L}_{\Lambda_\xi}$ is a measure on $\Lambda_\xi$ and by Proposition \ref{prop28}, we get
    \begin{eqnarray*}
        \mathscr{L}_{\Lambda_\xi}\big(\mathcal{D}_\xi(\mathfrak{F}\times \mathfrak{F})\backslash (g+\mathfrak{A}_\xi(\mathfrak{F}\times \mathfrak{F}))\big) &=& \mathscr{L}_{\Lambda_\xi}\big(\Lambda_\xi\backslash (g+\mathfrak{A}_\xi(\mathfrak{F}\times \mathfrak{F}))\big)\\
        &=& (\mathscr{L}^1\circ \Omega_\xi)\big(k\Phi_\xi\in \Lambda_\xi:~k\Phi_\xi-g\notin\mathfrak{A}_\xi(\mathfrak{F}\times \mathfrak{F})\big)\\
        &=& \mathscr{L}^1\big(k\in \mathbb{R}:~k\Phi_\xi-g\notin\mathfrak{A}_\xi(\mathfrak{F}\times \mathfrak{F})\big)\\
        &=& 0,
    \end{eqnarray*}
    establishes the claim.
\end{proof}
Now, on combining Proposition \ref{prop26} and Proposition \ref{prop29} we get our Theorem \ref{thm12}.

Now, let us define a subspace $\mathcal{HC}^\sigma(\mathfrak{F}\times \mathfrak{F})$, which is a collection of H\"{o}lder continuous functions with exponent $\sigma~(=1+2\eta-\xi)\in (0,1]$, of $\mathcal{D}_\xi(\mathfrak{F}\times \mathfrak{F})$ as:
$$\mathcal{HC}^\sigma(\mathfrak{F}\times \mathfrak{F}):=\Bigg\{g\in \mathcal{D}_\xi(\mathfrak{F}\times \mathfrak{F})~\Bigg{|}~[g]_\sigma=\sup_{\substack{\mathfrak{Z},\hat{\mathfrak{Z}}\in \mathfrak{F}\times \mathfrak{F}\\ \mathfrak{Z}\ne \hat{\mathfrak{Z}}}}\frac{|g(\mathfrak{Z})-g(\hat{\mathfrak{Z}})|}{~\|\mathfrak{Z}-\hat{\mathfrak{Z}}\|_4^{\sigma}}<\infty \Bigg\},$$
where $\|.\|_4$ signifies the Euclidean norm of $\mathbb{R}^4$. One can easily verify that $\mathcal{HC}^\sigma(\mathfrak{F}\times \mathfrak{F})$ is complete w.r.t. the norm $\|g\|_{\mathcal{HC}^\sigma}=\|g\|_\infty+[g]_\sigma$. The corresponding metric is symbolized by $\Theta_{\mathcal{HC}^\sigma}$. Let us now see the horizon property for $\mathcal{HC}^\sigma$.
\begin{theorem}
    The set
    $$\Bigg\{g\in \mathcal{HC}^\sigma(\mathfrak{F}\times \mathfrak{F})~\Big{|}~\dim_B\big(\mathfrak{Gr}(g)\big)=1+2\eta-\sigma\text{ and }\dim_B\big(\mathfrak{Gr}(H(g))\big)=1+\eta-\sigma\Bigg\}$$
    is a prevalent subset of $\big(\mathcal{HC}^\sigma(\mathfrak{F}\times \mathfrak{F}),\Theta_{\mathcal{HC}^\sigma}\big)$.
\end{theorem}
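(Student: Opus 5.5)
Write $\xi=1+2\eta-\sigma$ and denote the set in the statement by $\mathfrak{P}_\sigma$. The plan follows the structure of Theorem \ref{thm12}: show that $\mathfrak{P}_\sigma$ is Borel in $\big(\mathcal{HC}^\sigma(\mathfrak{F}\times\mathfrak{F}),\Theta_{\mathcal{HC}^\sigma}\big)$, and then exhibit a one-dimensional probe $\Lambda=\{k\Psi:~k\in\mathbb{R}\}$ with $\Psi\in\mathcal{HC}^\sigma(\mathfrak{F}\times\mathfrak{F})$.

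\textbf{Upper bounds.} First I record the bounds that hold for every $g\in\mathcal{HC}^\sigma(\mathfrak{F}\times\mathfrak{F})$. On a cell $L_\omega(\mathfrak{F})\times L_\tau(\mathfrak{F})$ of diameter $\lesssim|a|^m$ (taking equal contraction ratios for simplicity, as in Lemma \ref{lem3.1}), the H\"older condition gives
$$\mathcal{R}_g[L_\omega(\mathfrak{F})\times L_\tau(\mathfrak{F})]\le C[g]_\sigma|a|^{m\sigma}.$$
Since the number of cells is $\asymp|a|^{-2m\eta}$ by Note \ref{note2.2}, Lemma \ref{lem3.1} yields
$$N_\delta(\mathfrak{Gr}(g))\lesssim |a|^{-2m\eta}+|a|^{-m(1+2\eta-\sigma)},$$
and hence $\overline{\dim}_B\mathfrak{Gr}(g)\le\xi$.

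The horizon inherits the H\"older exponent. For $\mathbf{x},\hat{\mathbf{x}}\in\mathfrak{F}$ the argument of Lemma \ref{lem23}(2) gives
$$|H(g)(\mathbf{x})-H(g)(\hat{\mathbf{x}})|\le\sup_{\mathbf{y}}|g(\mathbf{x},\mathbf{y})-g(\hat{\mathbf{x}},\mathbf{y})|\le[g]_\sigma\|\mathbf{x}-\hat{\mathbf{x}}\|^\sigma.$$
By the one-variable analogue of Lemma \ref{lem3.1} (Note \ref{note25}), this gives $\overline{\dim}_B\mathfrak{Gr}(H(g))\le 1+\eta-\sigma=\xi-\eta$. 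So only the lower bounds $\underline{\dim}_B\mathfrak{Gr}(g)\ge\xi$ and $\underline{\dim}_B\mathfrak{Gr}(H(g))\ge\xi-\eta$ must be shown to hold prevalently.

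\textbf{Borel measurability.} I write the two lower-bound conditions as
$$\bigcap_{r\in\mathbb{Q}^+}\bigcup_{m_0}\bigcap_{m\ge m_0}\big\{g:~Osc(m,g)>|a|^{-m(\xi-1-r)}\big\}$$
and the analogous condition for $H(g)$. Each inner set is open for $\Theta_\infty$, by the continuity of $Osc(m,\cdot)$ proved in the lemma on uniform convergence and by Lemma \ref{lem23}(2). Since $\|\cdot\|_\infty\le\|\cdot\|_{\mathcal{HC}^\sigma}$, these sets are also open for $\Theta_{\mathcal{HC}^\sigma}$, as in Lemma \ref{lem23}(1). This follows Proposition \ref{prop26}.

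\textbf{Probe.} I take $\Psi(\mathbf{x},\mathbf{y})=\theta(\mathbf{x})$, where $\theta\in\mathcal{C}(\mathfrak{F})$ is $\sigma$-H\"older with $\underline{\dim}_B\mathfrak{Gr}(\theta)=1+\eta-\sigma$. Then $\Psi\in\mathcal{HC}^\sigma(\mathfrak{F}\times\mathfrak{F})$, $H(\Psi)=\theta$, and since the graph of $\Psi$ is essentially $\mathfrak{Gr}(\theta)\times\mathfrak{F}$, one gets $\dim_B\mathfrak{Gr}(\Psi)=\xi$.

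For a given $g$, the computation in Proposition \ref{prop28}(2) gives $H(g+k\Psi)=H(g)+k\theta$. Proposition \ref{prop24}(2) then gives, for $\mathscr{L}^1$-a.e.\ $k$,
$$\underline{\dim}_B\mathfrak{Gr}(g+k\Psi)\ge\xi\quad\text{and}\quad\underline{\dim}_B\mathfrak{Gr}(H(g)+k\theta)\ge\xi-\eta,$$
using its one-variable form on $\mathfrak{F}$ for the second. Combined with the upper bounds above, $g+k\Psi\in\mathfrak{P}_\sigma$ for a.e.\ $k$. The computation of Proposition \ref{prop29} then shows that $\Lambda$ is a probe.

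\textbf{Main obstacle.} The hard step is constructing $\theta$: a $\sigma$-H\"older function on the self-similar set $\mathfrak{F}$ whose oscillations are bounded \emph{below}, namely $\mathcal{R}_\theta[L_\omega(\mathfrak{F})]\gtrsim|a|^{m\sigma}$ for a positive proportion of words $\omega\in\Sigma^m$. My first attempt is a Weierstrass/Takagi-type series
$$\theta=\sum_{j}|a|^{j\sigma}\phi_j,$$
where $\phi_j$ is a fixed nonconstant Lipschitz bump transported to the level-$j$ cells by the maps $L_\omega$, with $\omega\in\Sigma^j$. The upper H\"older bound is the standard geometric-sum estimate. For the lower oscillation bound, the OSC keeps level-$j$ cells separated, so the level-$m$ bump yields oscillation $\asymp|a|^{m\sigma}$ on each cell, while the tail $j>m$ contributes only a smaller constant multiple, provided the bump amplitudes are chosen with a large enough ratio between successive levels.
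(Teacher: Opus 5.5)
Your proposal is correct and follows the same route as the paper. The shared steps are: H\"older upper bounds for $\mathfrak{Gr}(g)$, and via $[H(g)]_\sigma\le[g]_\sigma$ for $\mathfrak{Gr}(H(g))$; Borel measurability as in Proposition~\ref{prop26}, transferred to $\Theta_{\mathcal{HC}^\sigma}$ through $\|\cdot\|_\infty\le\|\cdot\|_{\mathcal{HC}^\sigma}$; and a one-dimensional probe spanned by $\Psi(\mathbf{x},\mathbf{y})=\theta(\mathbf{x})$, with $H(g+k\Psi)=H(g)+k\theta$ and Propositions~\ref{prop24} and~\ref{prop28} giving the lower bounds for a.e.\ $k$.

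You go beyond the paper in one place, and it is worth keeping. The paper reuses $\Phi_\xi$ built from an arbitrary $\theta_\xi\in\mathcal{B}_{\xi-\eta}(\mathfrak{F})$ and never checks that $\Phi_\xi\in\mathcal{HC}^\sigma(\mathfrak{F}\times\mathfrak{F})$. A probe in this space needs that membership, and the box dimension alone does not give it. Your requirement that $\theta$ be $\sigma$-H\"older with $\underline{\dim}_B\mathfrak{Gr}(\theta)=1+\eta-\sigma$ supplies exactly what is missing.

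Two details of your sketch of $\theta$ need repair.

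First, the OSC does not keep level-$j$ cells apart; in $[0,1]$ and $SG$ they touch. A bump on $\mathfrak{F}$ transported by the $L_\omega$ is therefore ill defined on overlaps. Instead, take the bump supported in a ball $B(p,\rho)\subset O$ with $p\in O\cap\mathfrak{F}$, which is possible by the strong OSC (Schief's theorem). Then the supports $L_\omega(B(p,\rho))$, $\omega\in\Sigma^j$, are pairwise disjoint, and each misses $L_{\omega'}(\mathfrak{F})\subset\overline{L_{\omega'}(O)}$ for $\omega'\neq\omega$.

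Second, the earlier levels $j<m$ vary on an $m$-cell by up to $\lesssim|a|^m\sum_{j<m}|a|^{-j(1-\sigma)}\asymp|a|^{m\sigma}$. That is the same order as the level-$m$ bump, so controlling the tail alone is not enough. Keep only the levels $j\in M\mathbb{N}$ with $M$ large. Then both the earlier and the later contributions on a level-$Mk$ cell are bounded by $C\big(|a|^{M(1-\sigma)}+|a|^{M\sigma}\big)|a|^{Mk\sigma}$. For intermediate $m$, the lower bound follows because every $m$-cell contains a cell of the next level used, at the cost of a factor $|a|^{M\sigma}$.

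Finally, treat $\sigma=1$ separately. There the set in the statement is all of $\mathcal{HC}^1(\mathfrak{F}\times\mathfrak{F})$: every Lipschitz graph has dimension $2\eta$ and its horizon has dimension $\eta$. Your geometric-sum H\"older bound for $\theta$ also loses a logarithm at $\sigma=1$.
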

\begin{proof}
    Our primary target is to show first that for $g\in \mathcal{HC}^\sigma(\mathfrak{F}\times \mathfrak{F})$, $H(g)\in \mathcal{HC}^\sigma(\mathfrak{F}).$ Thus, using the proof of Item (\ref{lem23_it2}), consider
    \begin{eqnarray*}
        [H(g)]_\sigma&=&\sup_{\substack{\mathbf{x}_1,\mathbf{x}_2\in \mathfrak{F}\\ \mathbf{x}_1\ne \mathbf{x}_2}}\frac{|H(g)(\mathbf{x}_1)-H(g)(\mathbf{x}_2)|}{~\|\mathbf{x}_1-\mathbf{x}_2\|_2^{\sigma}}\\
        &=&\sup_{\substack{\mathbf{x}_1,\mathbf{x}_2\in \mathfrak{F}\\ \mathbf{x}_1\ne \mathbf{x}_2}}~\sup_{\mathbf{y}\in \mathfrak{F}}\frac{|g(\mathbf{x}_1,\mathbf{y})-g(\mathbf{x}_2,\mathbf{y})|}{~\|\mathbf{x}_1-\mathbf{x}_2\|_2^{\sigma}}\\
        &=&\sup_{\substack{\mathbf{x}_1,\mathbf{x}_2\in \mathfrak{F}\\ \mathbf{x}_1\ne \mathbf{x}_2}}~\sup_{\mathbf{y}\in \mathfrak{F}}\frac{|g(\mathbf{x}_1,\mathbf{y})-g(\mathbf{x}_2,\mathbf{y})|}{\|(\mathbf{x}_1,\mathbf{y})-(\mathbf{x}_2,\mathbf{y})\|_4^{\sigma}}\\
        &<& \infty,
    \end{eqnarray*}
where $\|.\|_2,\|.\|_4$ denotes the Euclidean norm in $\mathbb{R}^2,\mathbb{R}^4$, respectively. Since $g\in \mathcal{HC}^\sigma(\mathfrak{F}\times \mathfrak{F})$, then $\overline{\dim}_B\big(\mathfrak{Gr}(g)\big)\le 1+2\eta-\sigma$. Similarly, $H(g)\in \mathcal{HC}^\sigma(\mathfrak{F})$ implies that $\overline{\dim}_B\big(\mathfrak{Gr}(H(g))\big)\le 1+\eta-\sigma.$ On combining this with Proposition \ref{prop28}, we get
$$\dim_B\big(\mathfrak{Gr}(H(g+k\Phi_\xi))\big)=1+\eta-\sigma.$$
The rest proof is similar to the proof of Theorem \ref{thm12}.
\end{proof}

\begin{note}
    Here, we elucidate the fact that one cannot drop the OSC for the fractal $\mathfrak{F}$ that we have assumed in Assumption \ref{assum2.1} as it plays a crucial role in establishing Lemma \ref{lem3.1}. To justify this, let $\{[0,1];~L_1,L_2\}$, where
    $$L_1(x)=\frac{3}{4} x,\quad L_2(x)=\frac{3}{4} x+\frac{1}{4}.$$
    It is easy to see that $L_1\left([0,1]\right)=\left[0,\frac{1}{4}\right]$ and $L_2\left([0,1]\right)=\left[\frac{1}{4},1\right]$. Thus, $\left[0,\frac{3}{4}\right]\cap \left[\frac{1}{4},1\right]\ne \emptyset$. Therefore, the OSC fails. Next, consider a function $g(x)=x$. By the definition of $L_\omega$, we know that $L_\omega([0,1])\times L_\tau ([0,1])$ is a rectangle of side length $\left(\frac{3}{4}\right)^m$. Thus, 
    $$R_g[L_\omega(\mathfrak{F})\times L_\tau (\mathfrak{F})]=\left(\frac{3}{4}\right)^m.$$
    As $\omega,\tau\in \Sigma^m~(=\{1,2\}^m)$, we get
    \begin{eqnarray*}
        \sum_{\omega,\tau\in \Sigma^m}\mathcal{R}_g[L_\omega(\mathfrak{F})\times L_\tau (\mathfrak{F})] &=& \left(\frac{3}{4}\right)^m \sum_{\omega,\tau\in \Sigma^m}1\\
        &=&3^m.
    \end{eqnarray*}
    Choose $\delta=\left(\frac{3}{4}\right)^m$, therefore to cover $\mathfrak{Gr}(g)$, we need $N_\delta\left(\mathfrak{Gr}(g)\right)\asymp \left(\frac{3}{4}\right)^m$, i.e., for some constant $C>0$,
    $$\left(\frac{3}{4}\right)^{-m}\le N_\delta\left(\mathfrak{Gr}(g)\right)\le C\left(\frac{3}{4}\right)^{-m}.$$
    Therefore,
    \begin{eqnarray*}
        \frac{\sum_{\omega,\tau\in \Sigma^m}\mathcal{R}_g[L_\omega(\mathfrak{F})\times L_\tau (\mathfrak{F})]}{\left(\frac{3}{4}\right)^m}&=& 4^{m}\\
        &>& {C\left(\frac{3}{4}\right)^m}\\
        &\ge& N_\delta\big(Gr(g)\big).
    \end{eqnarray*}
    Thus, Lemma \ref{lem3.1} fails. Therefore, one cannot do the further study of the main results that have been done in this article. Hence, the OSC is the necessary one that cannot be dropped.
\end{note}

\section{Canonical Cases}\label{sec:4}
In this section, we discuss particular cases of the developed analytical results of the previous section. In the next two examples, we consider the two notorious self-similar fractals from $\mathbb{R}$ and $\mathbb{R}^2$ and discuss how the results of Section \ref{sec:3} update for these particular cases.

\begin{example}\label{ex4.1}
    Consider the IFS $\mathfrak{J}_{[0,1]}=\{\mathbb{R};~L_1,L_2,L_3\}$ with 
    $$L_1(x)=\frac{1}{3} x,\quad L_2(x)=\frac{1}{3} x+\frac{1}{3},\quad L_3(x)=\frac{1}{3} x+\frac{2}{3}.$$
    Corresponding to this IFS, one can get its fractal $\mathfrak{F}=[0,1]\in\mathscr{K}(\mathbb{R})$ such that it satisfies the OSC. Thus, by Note \ref{note2.2}, the box dimension of $[0,1]$ exists and $\eta=\dim_B\left([0,1]\right)=1$. Next, $|a|=\max_{1\le i \le 3}  \frac{1}{3}= \frac{1}{3}$, by Lemma \ref{lem3.1}, we can cover any continuous function $g$ on $[0,1]\times[0,1]$ as
    \begin{equation}
        3^m\sum_{\omega,\tau\in \Sigma^m}\mathcal{R}_g[L_\omega([0,1])\times L_\tau ([0,1])]\le N_\delta\big(Gr(g)\big)\le 2.3^m.3^m+3^m\sum_{\omega,\tau\in \Sigma^m}\mathcal{R}_g[L_\omega([0,1])\times L_\tau ([0,1])].\label{eqn4.1}
    \end{equation}
    The oscillation space $\mathbb{O}^{\xi}$ on $[0,1] \times [0,1],\text{ for}~\xi \in [2,3]$, is defined as
    $$ \mathbb{O}^{\xi}([0,1] \times [0,1])= \Big\{g \in \mathcal{C}([0,1] \times [0,1]): \sup_{m \in \mathbb{N} }\dfrac{ Osc(m,g)}{3^{m\left(5-\xi\right)}}< \infty \Big\}.$$
    The main result Theorem \ref{thm12} for this case is 
    \begin{enumerate}
        \item for each $\xi \in [2,3)$, the subset $\mathfrak{A}_{\xi}([0,1] \times [0,1])$ is a prevalent of $\big(\mathcal{D}_{\xi}([0,1] \times [0,1]),\Theta_\xi\big);$
        \item the set $\mathfrak{A}_{3}([0,1] \times [0,1])$ is a prevalent subset of $\big(\mathcal{C}([0,1] \times [0,1]),\Theta_\infty\big)$.
    \end{enumerate}
\end{example}

\begin{example}\label{ex4.2}
    For each $i\in\{1,2,3\}$, let $L_i:\mathbb{R}^2\to \mathbb{R}^2$ are defined as $$L_i (x,y)=\frac{1}{2}\left(x+q_1^{(i)},y+q_2^{(i)}\right),$$
    where $\left(q_1^{(i)},q_2^{(i)}\right)$ are the vertices of an equilateral triangle in $\mathbb{R}^2$. Subsequently, the IFS $\mathfrak{J}_{SG}=\left\{\mathbb{R}^2;~L_1,L_2,L_3\right\}$ yields the fractal as Sierpi\'nski gasket ($SG$), which lies in $\mathscr{K}(\mathbb{R}^2)$. Since $SG$ also satisfies the OSC, in reference to Note \ref{note2.2}, its box dimension exists and $\eta=\dim_B\left(SG\right)=\frac{\log 3}{\log 2}$. For $|a|=\max_{1\le i \le 3}  \frac{1}{2}= \frac{1}{2}$, Lemma \ref{lem3.1} yields an inequality to cover any continuous function $g$ on $SG\times SG$ as 
    \begin{equation}
        2^m\sum_{\omega,\tau\in \Sigma^m}\mathcal{R}_g[L_\omega(SG)\times L_\tau (SG)]\le N_\delta\big(Gr(g)\big)\le 2.3^m.3^m+2^m\sum_{\omega,\tau\in \Sigma^m}\mathcal{R}_g[L_\omega(SG)\times L_\tau (SG)].\label{eqn4.2}
    \end{equation}
    For $\xi \in \left[\frac{2\log 3}{\log 2},\frac{2\log 3}{\log 2}+1\right]$, the corresponding oscillation space $\mathbb{O}^{\xi}$ on $SG \times SG$, is defined as
    $$ \mathbb{O}^{\xi}(SG \times SG)= \Big\{g \in \mathcal{C}(SG \times SG): \sup_{m \in \mathbb{N} }\dfrac{ Osc(m,g)}{2^{m\left(1+\frac{4\log 3}{\log 2}-\xi\right)}}< \infty \Big\}.$$
    Theorem \ref{thm12} will be updated in this case as
    \begin{enumerate}
        \item for each $\xi \in \left[\frac{2\log 3}{\log 2},\frac{2\log 3}{\log 2}+1\right)$, the subset $\mathfrak{A}_{\xi}(SG \times SG)$ is a prevalent of $\big(\mathcal{D}_{\xi}(SG \times SG),\Theta_\xi\big);$
        \item the set $\mathfrak{A}_{\frac{2\log 3}{\log 2}+1}(SG \times SG)$ is a prevalent subset of $\big(\mathcal{C}(SG \times SG),\Theta_\infty\big)$.
    \end{enumerate}
\end{example}

\section{Conclusions and Future Aspects}
We have discussed the dimensional results of the graphs of the continuous functions defined over the product of two self-similar fractals ($\mathfrak{F}$s), where the fractal $\mathfrak{F}$ satisfies the open set condition. Following this, we generalized the horizon problem of the prevalent surface over the product of two self-similar fractals ($\mathfrak{F}$s). Precisely, we demonstrated the existence of a prevalent surface, defined by the continuous function over $\mathfrak{F\times F}$, that satisfies the horizon property. To consolidate this theory, we have included two illustrations covering the particular cases for the notorious fractals of $\mathbb{R}$ and $\mathbb{R}^2$.

\section*{Declaration}
\noindent
\textbf{Funding:}  This research did not receive any specific grant from funding agencies in the public, commercial, or not-for-profit sectors.\\
\\
\textbf{Conflicts of interest.} The authors further confirm that there are no conflicts of interest associated with this work.\\
\\
\noindent
\textbf{Data availability:} No datasets were employed in the conduct of this study.\\
\\
\noindent
\textbf{Code availability:} Not applicable.\\
\\
\textbf{Declaration of generative AI and AI-assisted technologies in the writing process:}
During the preparation of this work the author(s) did not make use of any generative AI.\\
\\
\noindent
\textbf{Authors' contributions:} All authors have made an equal contribution to the preparation of this manuscript.
\section*{Acknowledgements}
The first author acknowledges the Prime Minister's Research Fellowship (PMRF) with PMRF ID: 2202749, Ministry of Education, Government of India, for the financial support during the Ph.D. work.

\bibliographystyle{elsarticle-num-names}
\bibliography{ref}

\end{document}